\documentclass[11pt,reqno]{amsart}
\oddsidemargin = 0cm \evensidemargin = 0cm \textwidth = 16cm
\usepackage{caption}
 \usepackage{amssymb,amsfonts,amscd,amsbsy, color, amsmath}
\usepackage[mathscr]{eucal}
\usepackage{url}
\usepackage{graphicx}
\usepackage{mathtools}
\usepackage{todonotes}
\usepackage{tabularx}
\usepackage{tabu}
\usepackage{float, color}
\usepackage{placeins}
\usepackage{cite}

\newtheorem{theorem}{Theorem}[section]

\newtheorem{corollary}[theorem]{Corollary}

\newtheorem{conjecture}[theorem]{Conjecture}
\theoremstyle{definition}
\newtheorem{remark}[theorem]{Remark}
\numberwithin{equation}{section}

\makeatletter
\def\imod#1{\allowbreak\mkern5mu({\operator@font mod}\,\,#1)}
\makeatother
\allowdisplaybreaks

\makeatletter
\@namedef{subjclassname@2020}{%
  \textup{2020} Mathematics Subject Classification}
\makeatother

\begin{document}

\title[Asymptotics and sign patterns for coefficients in expansions of Habiro elements]{Asymptotics and sign patterns for coefficients in expansions of Habiro elements}

\author{Ankush Goswami}

\author{Abhash Kumar Jha}

\author{Byungchan Kim}

\author{Robert Osburn}

\address{School of Mathematical and Statistical Sciences, the University of Texas Rio Grande Valley, 1201 W. University Dr., Edinburg, TX, 78539}
\email{ankush.goswami@utrgv.edu, ankushgoswami3@gmail.com}

\address{Department of Mathematical Sciences, Indian Institute of Technology (Banaras Hindu University), Varanasi - 221005, India}
\email{abhash.mat@iitbhu.ac.in, abhashkumarjha@gmail.com}

\address{School of Natural Sciences, Seoul National University of Science and Technology, 232 Gongneung-ro, Nowon-gu, Seoul, 01811, Republic of Korea}
\email{bkim4@seoultech.ac.kr}

\address{School of Mathematics and Statistics, University College Dublin, Belfield, Dublin 4, Ireland}
\email{robert.osburn@ucd.ie}

\subjclass[2020]{05A16, 13B35, 57K16, 11B83}
\keywords{Asymptotics, Habiro ring, strange identities, generalized Fishburn numbers}

\date{\today}

\begin{abstract}
We prove asymptotics and study sign patterns for coefficients in expansions of elements in the Habiro ring which satisfy a strange identity. As an application, we prove asymptotics and discuss positivity for the generalized Fishburn numbers which arise from the Kontsevich-Zagier series associated to the colored Jones polynomial for a family of torus knots. This extends Zagier's result on asymptotics for the Fishburn numbers.
\end{abstract}

\maketitle

\section{Introduction}
The expression
\begin{equation}\label{f}
F(q) := \sum_{n=0}^{\infty} (q)_n
\end{equation}
first occurred in a talk entitled  ``Analytic continuation of Feynman integrals" by Kontsevich as part of the Seminar on Algebra, Geometry and Physics at MPIM Bonn on October 14, 1997. Here and throughout,
\begin{equation*}
(a)_n = (a;q)_n := \prod_{k=1}^{n} (1-aq^{k-1})
\end{equation*}
is the standard $q$-hypergeometric notation. Note that (\ref{f}) does not converge on any open subset of $\mathbb{C}$, but is well-defined when $q$ is a root of unity (where it is finite) and when $q$ is replaced by $1-q$. Moreover, $F(q)$ is an element of the Habiro ring \cite{habiro}
\begin{eqnarray*}
\displaystyle \mathcal{H} := \varprojlim_{n} \mathbb{Z}[q] / \langle (q)_n \rangle. 
\end{eqnarray*}
Motivated by Kontsevich's lecture and Stoimenow's work on regular linearized chord diagrams \cite{sto}, Zagier determined the asymptotic behavior for the Fishburn numbers $\xi(n)$ which are the coefficients in the formal power series expansion

\begin{equation*} \label{fish}
F(1-q) =: \sum_{n=0}^{\infty} \xi(n) q^n = 1 + q + 2q^2 + 5q^3 + 15q^4 + 53q^5 + \cdots.
\end{equation*}
Namely, as $n \to \infty$ \cite[Theorem 4]{z1}

\begin{equation} \label{fishasy}
\xi(n)\sim \Bigl(\dfrac{6}{\pi^2}\Bigr)^n n! \sqrt{n} \left(C_0+\dfrac{C_1}{n}+\cdots\right)    
\end{equation}
where $C_0=\frac{12\sqrt{3}}{\pi^{\frac{5}{2}}}e^{\tfrac{\pi^2}{12}},\;C_1=C_0\left(\frac{3}{8}-\frac{17\pi^2}{144}+\frac{\pi^4}{432}\right)$ and all $C_i$ are effectively computable constants. A key step in proving (\ref{fishasy}) is the ``strange identity"

\begin{equation} \label{strid}
F(q) ``=" -\frac{1}{2} \sum_{n=1}^{\infty} n \Bigl( \frac{12}{n} \Bigr) q^{\frac{n^2 - 1}{24}}
\end{equation}
where $``="$ means that the two sides agree to all orders at every root of unity (for further details, see \cite[Sections 2 and 5]{z1}) and $\bigl( \frac{12}{*} \bigr)$ is the quadratic character of conductor $12$. The idea is to first express $\xi(n)$ in terms of the Taylor series coefficients of $F(e^{-t})$, then employ (\ref{strid}) to, ultimately, obtain estimates for these coefficients. These estimates, in turn, lead to (\ref{fishasy}). ``Identities" such as (\ref{strid}) are not only important in proving asymptotics, but also play a crucial role in obtaining congruences for $\xi(n)$ modulo prime powers \cite{akl} and quantum modularity for $F(q)$ \cite{z}. For developments in these latter two directions, see \cite{ak, akl, as, Be, garvan, an, ano, gkr, straub}. The positivity of the Fishburn numbers $\xi(n)$ is a consequence of any of its numerous combinatorial interpretations \cite[A022493]{oeis}. The purpose of this paper is to prove asymptotics and study sign patterns for coefficients in expansions of elements in $\mathcal{H}$ which satisfy a general type of strange identity. Before stating our main result, we introduce some notation.

Let $f:\mathbb{Z}\rightarrow\mathbb{C}$ be a function of period $M\geq 2$. For integers $a \geq 0$ and $b>0$, consider the partial theta series
\begin{equation*}
\theta_{a,b,f}^{(\nu)}(q):=\sum_{n=0}^{\infty} n^\nu f(n)q^{\frac{n^2-a}{b}}
\end{equation*}
where $\nu \in \{0, 1\}$. Suppose there exists
\begin{eqnarray*}\label{Habiro}
F_{f}(q) := \sum_{n=0}^\infty A_{n, f}(q)(q)_n \in \mathcal{H}
\end{eqnarray*}
where $A_{n, f}(q) \in \mathbb{Z}[q]$ such that 
\begin{eqnarray}\label{gen-strange}
F_f(q) ``=" \theta_{a,b,f}^{(\nu)}(q).
\end{eqnarray}
We write
\begin{eqnarray*}
F_f(1-q)=:\sum_{n=0}^{\infty} \xi_f(n)q^n
\end{eqnarray*}
and define \begin{eqnarray}\label{Gfk}
G_{f}^{(\nu)}(k):=\begin{cases}
\dfrac{2}{\sqrt{M}}\displaystyle\sum_{m\!\!\!\!\!\pmod{M}} f(m)\sin\Bigl(\frac{2\pi mk}{M}\Bigr) &\text{if $\nu=0$,}\\\mbox{}\\
\dfrac{2}{\sqrt{M}}\displaystyle\sum_{m\!\!\!\!\!\pmod{M}}f(m)\cos\Bigl(\frac{2\pi mk}{M}\Bigr) &\text{if $\nu=1$.}
\end{cases}
\end{eqnarray}
Assume there exists a smallest positive integer $k_\nu$ such that $G_f^{(\nu)}(k_\nu)\neq 0$. Next, we define
\begin{eqnarray}\label{Mbound}
M_{f,\nu}:=2\cdot \dfrac{\#\{0\leq m\leq M-1: f(m)\neq 0\}}{|G_f^{(\nu)}(k_\nu)|\sqrt{M}}\max_{0\leq m\leq M-1}|f(m)|
\end{eqnarray}
and let $N_{f, \nu}^{\text{(max)}} \geq 0$ be the smallest integer such that 
\begin{eqnarray}\label{Gbound2}
M_{f,\nu}\left(\zeta(2n+\nu+1)-1\right)<1
\end{eqnarray}
for $n \geq N_{f,\nu}^{(\text{max})}$ where $\zeta(s)$ is the Riemann zeta function. Observe that $N_{f,\nu}^{(\text{max})}$ exists since $\zeta(2n+\nu+1)-1 \to 0$ as $n\to\infty$ and $M_{f,\nu}$ is independent of $n$. Finally, let $B_k(x)$ denote the $k$th Bernoulli polynomial for an integer $k\geq 0$. Our main result is now the following.

\begin{theorem}\label{main}
Assume (\ref{gen-strange}) is true. Then as $n\rightarrow\infty$, we have
\begin{eqnarray} \label{mainasy}
\xi_f(n)\sim  (-1)^\nu\left(\dfrac{M}{2\pi k_\nu}\right)^{2n+\nu+1}\dfrac{G_{f}^{(\nu)}(k_\nu)2^{2n+\nu}n! n^{\nu-\frac{1}{2}}}{b^n\sqrt{\pi M}}e^{\frac{bk_\nu^2\pi^2}{2M^2}}.
\end{eqnarray}
Moreover, there exists an integer $0\leq N_{f,\nu} \leq N_{f,\nu}^{(\text{max})}$ such that if 
\begin{equation}\label{Gbound}
(-1)^{n+1}\sum_{m=1}^M f(m)B_{2n+\nu+1}\left(\dfrac{m}{M}\right)
\end{equation}
has the same sign as $(-1)^{\nu} G_{f}^{(\nu)}(k_{\nu})$ for all $0\leq n < N_{f,\nu}$, then for all non-negative integers $\ell$, $\xi_f(\ell)$ has the same sign as $(-1)^{\nu} G_{f}^{(\nu)}(k_{\nu})$. 
\end{theorem}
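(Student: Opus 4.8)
The plan is to adapt Zagier's treatment of the Fishburn numbers \cite{z1}, reducing both halves of Theorem \ref{main} to the arithmetic of $L(s,f):=\sum_{n\ge 1}f(n)n^{-s}$ together with one elementary sign observation about Stirling numbers. \textbf{Step 1: an exact $L$-value formula for $\xi_f$.} Each summand of $\theta_{a,b,f}^{(\nu)}(1-q)$ is a polynomial in $q$, so, formally,
\[ [q^\ell]\,\theta_{a,b,f}^{(\nu)}(1-q)=(-1)^\ell\sum_{n\ge 0}n^\nu f(n)\binom{(n^2-a)/b}{\ell}. \]
Expanding $\binom{(X-a)/b}{\ell}$ as a polynomial in $X$ and setting $X=n^2$ rewrites the (zeta-regularized) sum as a rational combination of the regularized moments $\sum_{n\ge 1}n^{2i+\nu}f(n)=L(-2i-\nu,f)$, $0\le i\le\ell$. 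The first thing to prove is that the strange identity (\ref{gen-strange}), together with the fact that $F_f$ is a Habiro element, forces $\xi_f(\ell)=[q^\ell]F_f(1-q)$ to equal this regularized coefficient; inserting $L(-2i-\nu,f)=-\tfrac{M^{2i+\nu}}{2i+\nu+1}\sum_{m=1}^{M}f(m)B_{2i+\nu+1}(m/M)$ then expresses $\xi_f(\ell)$ through the quantities in (\ref{Gbound}).

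\textbf{Step 2: positivity of the coefficients.} The structural point behind the sign statement is that the resulting coefficients are non-negative. Writing $\binom{(X-a)/b}{\ell}=\tfrac1{\ell!}\sum_j s(\ell,j)\bigl((X-a)/b\bigr)^{j}$, with $s(\ell,j)$ the Stirling numbers of the first kind, and $\bigl((X-a)/b\bigr)^{j}=b^{-j}\sum_i\binom ji(-a)^{j-i}X^i$, each contribution to the coefficient of $X^i$ carries the sign $(-1)^{\ell-j}(-1)^{j-i}=(-1)^{\ell-i}$, which is independent of $j$, while $a\ge 0$ and $b>0$ keep the other factors non-negative; hence $\binom{(n^2-a)/b}{\ell}=\sum_{i=0}^{\ell}(-1)^{\ell-i}|c_i|\,n^{2i}$ with all $|c_i|\ge 0$. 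Tracking the residual signs through Step 1 yields
\[ \xi_f(\ell)=\sum_{i=0}^{\ell}|c_i|\,\frac{M^{2i+\nu}}{2i+\nu+1}\left[(-1)^{i+1}\sum_{m=1}^{M}f(m)B_{2i+\nu+1}\!\left(\frac mM\right)\right], \]
a non-negative combination of the quantities in (\ref{Gbound}).

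\textbf{Step 3: evaluating (\ref{Gbound}), then signs and asymptotics.} Substituting the Fourier expansions $B_{2n+1}(x)=\tfrac{2(-1)^{n+1}(2n+1)!}{(2\pi)^{2n+1}}\sum_{k\ge1}k^{-2n-1}\sin(2\pi kx)$ and $B_{2n+2}(x)=\tfrac{2(-1)^{n}(2n+2)!}{(2\pi)^{2n+2}}\sum_{k\ge1}k^{-2n-2}\cos(2\pi kx)$ and using (\ref{Gfk}) collapses (\ref{Gbound}) to $(-1)^{\nu}\tfrac{(2n+\nu+1)!\sqrt M}{(2\pi)^{2n+\nu+1}}\sum_{k\ge1}G_f^{(\nu)}(k)k^{-(2n+\nu+1)}$: a positive multiple of $(-1)^\nu$ times a Dirichlet series whose leading nonzero term is $(-1)^\nu G_f^{(\nu)}(k_\nu)k_\nu^{-(2n+\nu+1)}$. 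Crudely bounding the remaining terms by $\#\{m:f(m)\neq0\}\max_m|f(m)|$ and by $\zeta(2n+\nu+1)-1$ shows the leading term dominates as soon as $M_{f,\nu}(\zeta(2n+\nu+1)-1)<1$, i.e. for $n\ge N_{f,\nu}^{(\text{max})}$; hence (\ref{Gbound}) has the same sign as $(-1)^\nu G_f^{(\nu)}(k_\nu)$ for all such $n$, and we may take $N_{f,\nu}\le N_{f,\nu}^{(\text{max})}$ to be the least threshold past which this persists. Granting the displayed hypothesis for $0\le n<N_{f,\nu}$, every bracket in the Step 2 formula has this sign, hence so does $\xi_f(\ell)$. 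For (\ref{mainasy}), the same evaluation shows that for large $i$ the $i$th bracket is asymptotic to $(-1)^\nu G_f^{(\nu)}(k_\nu)\tfrac{(2i+\nu+1)!\sqrt M}{(2\pi k_\nu)^{2i+\nu+1}}$, so $\xi_f(\ell)$ concentrates at the top index $i=\ell$; its top term, evaluated via the duplication formula for $(2\ell)!$, produces $(M/2\pi k_\nu)^{2\ell+\nu+1}2^{2\ell+\nu}\ell!\,\ell^{\nu-\frac12}/(b^\ell\sqrt{\pi M})$, and a Laplace-type summation of the contributions from $i=\ell-j$, $j\ge0$, supplies the remaining constant $e^{bk_\nu^2\pi^2/(2M^2)}$. (Taking $M=12$, $b=24$, $a=1$, $\nu=1$, $f=-\tfrac12\bigl(\tfrac{12}{\cdot}\bigr)$, so that $k_\nu=1$ and $G_f^{(1)}(1)=-1$, recovers (\ref{fishasy}).)

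\textbf{Expected main obstacle.} The delicate step is the transfer in Step 1: the strange identity only asserts agreement of $F_f$ and $\theta_{a,b,f}^{(\nu)}$ to all orders at every root of unity, and one must show this forces the genuine power-series coefficients $\xi_f(\ell)$ of $F_f(1-q)$ to coincide with the \emph{regularized} coefficients of $\theta_{a,b,f}^{(\nu)}(1-q)$ — precisely where membership of $F_f$ in the Habiro ring is indispensable, and where the hypothesis (\ref{gen-strange}) is genuinely used. The second demanding piece is the constant bookkeeping in the Laplace summation of Step 3 needed to match (\ref{mainasy}) exactly; by contrast, once the formula of Step 2 is in hand the sign arguments are routine.
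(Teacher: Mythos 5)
Your proposal is correct and follows essentially the same route as the paper: both arguments hinge on (i) the strange identity forcing the Taylor coefficients at $q=1$ to be the special values $L(-2i-\nu,f)$, (ii) the functional equation/Fourier expansion converting these to the Dirichlet series in $G_f^{(\nu)}(k)$ and to the Bernoulli sums (\ref{Gbound}), (iii) the non-negativity of the change-of-basis coefficients (signed Stirling numbers combined with the binomial shift by $a/b$), and (iv) a Laplace-type summation over $i=\ell-j$ producing the constant $e^{bk_\nu^2\pi^2/(2M^2)}$. The only difference is organizational — you compose the paper's two intermediate expansions ($C_{n,f}$ and $B_{n,f}$) into a single closed formula for $\xi_f(\ell)$ as a non-negative combination of $L$-values, and your sign bookkeeping (including the reduction of (\ref{Gbound}) to $(-1)^\nu\frac{(2n+\nu+1)!\sqrt{M}}{(2\pi)^{2n+\nu+1}}\sum_k G_f^{(\nu)}(k)k^{-(2n+\nu+1)}$ and the Fishburn specialization) checks out.
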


The paper is organized as follows. In Section 2, we prove Theorem \ref{main}. In Section 3, we give some applications, including asymptotics and positivity statements for the generalized Fishburn numbers $\xi_t(n)$ which arise from the Kontsevich-Zagier series $\mathscr{F}_t(q)$ associated to the colored Jones polynomial for the family of torus knots $T(3, 2^t)$, $t \geq 2$ \cite{Be}. This extends (\ref{fishasy}) and gives an alternative proof of the positivity of the Fishburn numbers $\xi(n)$ (see Corollary \ref{genFishasymp1} and Remark \ref{ev1}). In Section 4, we comment on asymptotics for other expansions of $F_f(q)$ and then conclude with conjectures concerning the positivity of coefficients for these expansions in three situations: $\mathscr{F}_t(q)$, the Kontsevich-Zagier series associated to the colored Jones polynomial for the family of torus knots $T(2, 2m+1)$, $m \geq 1$ and a ``Habiro-type" $q$-series with origins in Ramanujan's lost notebook \cite{Andrews}.

\section{Proof of Theorem \ref{main}}

\begin{proof}[Proof of Theorem \ref{main}]
We follow the strategy of \cite{z1}. To find the asymptotics of $\xi_f(n)$, we first consider the expansions
\begin{eqnarray}\label{gen-expansions1}
F_f(e^{-t})=\sum_{n=0}^\infty\dfrac{B_{n,f}}{n!}t^n
\end{eqnarray}
and
\begin{eqnarray} \label{gen-expansions2}
e^{\frac{-ta}{b}}F_f(e^{-t})=\sum_{n=0}^\infty\dfrac{C_{n,f}}{n!}\left(\frac{t}{b}\right)^n.
\end{eqnarray}
Let 
\begin{eqnarray*}
\mathcal{P}_{a,b,f}^{(\nu)}(q) :=q^{\frac{a}{b}} \theta_{a,b,f}^{(\nu)}(q)
\end{eqnarray*}
and define the $L$-function
\begin{eqnarray*}
L(s,f) =\sum_{n=1}^\infty\dfrac{f(n)}{n^s}.
\end{eqnarray*}
The Mellin transform of $\mathcal{P}_{a,b,f}^{(\nu)}(e^{-t})$ is
\begin{eqnarray}\label{Mellin}
\int_0^\infty t^{s-1}\mathcal{P}_{a,b,f}^{(\nu)}(e^{-t})dt&=&\sum_{n=0}^{\infty} n^\nu f(n)\int_0^\infty t^{s-1}e^{\frac{-tn^2}{b}}dt\notag\\
&=&b^s\Gamma(s)\sum_{n=0}^{\infty} \dfrac{f(n)}{n^{2s-\nu}} \notag\\
&=& b^s\Gamma(s)L(2s-\nu,f)
\end{eqnarray}
where $\Gamma(s)$ is the usual Gamma function. Applying Mellin inversion to \eqref{Mellin}, we obtain
\begin{eqnarray*}
\mathcal{P}_{a,b,f}^{(\nu)}(e^{-t})=\dfrac{1}{2\pi i}\int_{\operatorname{Re}(s)=c}b^s\Gamma(s)L(2s-\nu,f)\dfrac{ds}{t^s}
\end{eqnarray*}
where $\operatorname{Re}(s)=c$ is the abscissa of absolute convergence of $\frac{b^s\Gamma(s)L(2s-\nu,f)}{t^s}$. It is well-known that $L(s,f)$ can be analytically continued to the whole complex plane except for a possible simple pole at $s=1$ with residue 
\begin{eqnarray*}
R_{f,M}:=\dfrac{1}{M}\sum_{m\!\!\!\!\!\pmod{M}}f(m).
\end{eqnarray*}
By a standard complex analytic computation, we have
\begin{eqnarray*}
\mathcal{P}_{a,b,f}^{(\nu)}(e^{-t})\sim
\displaystyle\sum_{n=0}^\infty \dfrac{(-1)^nL(-2n-\nu,f)}{b^nn!}t^n+\dfrac{b^{\frac{\nu+1}{2}}\Gamma\left(\frac{\nu+1}{2}\right)R_{f,M}}{t^{\frac{\nu+1}{2}}}
\end{eqnarray*}
as  $t \rightarrow 0^+$. By  \eqref{gen-strange} and \eqref{gen-expansions2}, we compare coefficients to obtain
\begin{eqnarray}\label{Cnf}
C_{n,f}=(-1)^nL(-2n-\nu,f).
\end{eqnarray}
Also, it is clear from \eqref{gen-strange} and \eqref{gen-expansions2} that $R_{f,M}=0$. Via \cite[Eqn. (24)]{schnee} or \cite[Chapter 12]{apostol}, we have 
\begin{eqnarray}\label{Lfunc}
L(-2n-\nu,f)=(-1)^{n+\nu}\left(\dfrac{M}{2\pi}\right)^{2n+\nu+1}\dfrac{(2n+\nu)!}{\sqrt{M}}\sum_{k=1}^\infty\dfrac{G_{f}^{(\nu)}(k)}{k^{2n+\nu+1}}.
\end{eqnarray}
Let $k_\nu\geq 1$ be the smallest integer such that $G_f^{(\nu)}(k_\nu)\neq 0$. Then
\begin{eqnarray}\label{gen-L}
L(-2n-\nu,f)=(-1)^{n+\nu}\left(\dfrac{M}{2\pi}\right)^{2n+\nu+1}\dfrac{G_{f}^{(\nu)}(k_\nu)(2n+\nu)!}{k_\nu^{2n+\nu+1}\sqrt{M}}\left(1+O\left(\left(\frac{k_\nu}{k_\nu+1}\right)^{2n-\varepsilon}\right)\right)
\end{eqnarray}
for any $\varepsilon>0$\footnote{We can take $\varepsilon=0$ when $\nu=1$.}. Using \eqref{Cnf} and \eqref{gen-L}, it follows 
\begin{eqnarray}\label{Cnf-asymp}
C_{n,f}=(-1)^\nu\left(\dfrac{M}{2\pi}\right)^{2n+\nu+1}\dfrac{G_{f}^{(\nu)}(k_\nu)(2n+\nu)!}{k_\nu^{2n+\nu+1}\sqrt{M}}\left(1+O\left(\left(\frac{k_\nu}{k_\nu+1}\right)^{2n-\varepsilon}\right)\right).
\end{eqnarray}
From \eqref{gen-expansions1}, we have
\begin{eqnarray}\label{BCnf-relation}
B_{n,f}=\dfrac{1}{b^n}\sum_{k=0}^n\binom{n}{k}a^{n-k}C_{k,f}=\dfrac{1}{b^n}\left(C_{n,f}+ naC_{n-1,f}+\dfrac{n(n-1)a^2}{2}C_{n-2,f}+\cdots\right).
\end{eqnarray}
From \eqref{Cnf-asymp} and \eqref{BCnf-relation}, we deduce
\begin{eqnarray}\label{Bnf-asymp1}
&&B_{n,f}=\dfrac{C_{n,f}}{b^n}\left(1+\dfrac{na \left(\frac{2\pi k_\nu}{M}\right)^2}{(2n+\nu-1)(2n+\nu)}\right.\notag\\&&\hspace{3.5cm}\left.+\;\dfrac{n(n-1)a^2 \left(\frac{2\pi k_\nu}{M}\right)^4}{2(2n+\nu-3)(2n+\nu-2)(2n+\nu-1)(2n+\nu)}+\cdots\right).
\end{eqnarray}
An application of Stirling's formula 
\begin{equation}\label{Stirfor1}
n!=\sqrt{2\pi n} \left(\dfrac{n}{e}\right)^n \left(1 +\dfrac{1}{12n}+\frac{1}{288n^{2}} + \cdots \right)
\end{equation}
implies
\begin{align}\label{Stir1}
\dfrac{(2n+\nu)!}{n!^2} = \dfrac{2^{2n+\nu} n^{\nu-\frac{1}{2}}}{\sqrt{\pi}}\left(1+\dfrac{(-1)^{\nu+1}(2\nu+1)}{8n}+ \dfrac{(-1)^\nu(6\nu+1)}{128n^2}+\cdots\right).  
\end{align}
Combining \eqref{Cnf-asymp}, \eqref{Bnf-asymp1} and \eqref{Stir1} yields
\begin{align*}
B_{n,f}&=(-1)^\nu\left(\dfrac{M}{2\pi k_\nu}\right)^{2n+\nu+1}\dfrac{G_{f}^{(\nu)}(k_\nu)2^{2n+\nu} n!^2 n^{\nu-\frac{1}{2}}}{b^n\sqrt{\pi M}}\left(1+O\left(\left(\frac{k_\nu}{k_\nu+1}\right)^{2n-\varepsilon}\right)\right)\notag\\
&\times\left(1+\dfrac{(-1)^{\nu+1}(2\nu+1)}{8n}+ \dfrac{(-1)^\nu(6\nu+1)}{128n^2}+\cdots\right)\left(1+\dfrac{na \left(\frac{2\pi k_\nu}{M}\right)^2}{(2n+\nu-1)(2n+\nu)}\right. \notag \\
&\hspace{3.5cm}\left.+\;\dfrac{n(n-1) a^2 \left(\frac{2\pi k_\nu}{M}\right)^4}{2(2n+\nu-3)(2n+\nu-2)(2n+\nu-1)(2n+\nu)}+\cdots\right) \nonumber \\
\end{align*}
and so
\begin{eqnarray} \label{key}
B_{n,f} \sim (-1)^{\nu}\left(\dfrac{M}{2\pi k_\nu}\right)^{2n+\nu+1}\dfrac{G_{f}^{(\nu)}(k_\nu)2^{2n+\nu} n!^2 n^{\nu-\frac{1}{2}}}{b^n\sqrt{\pi M}}\left(1+\dfrac{\alpha_{1,f,\nu}}{n}+\dfrac{\alpha_{2,f,\nu}}{n^2}+\cdots\right)
\end{eqnarray}
where $\alpha_{1,f,\nu}:=\frac{a\left(\frac{2\pi k_\nu}{M}\right)^2}{4}+\frac{(-1)^{\nu+1}(2\nu+1)}{8}$ and all the remaining constants $\alpha_{i,f,\nu}$ are effectively computable. Now, we recall \begin{eqnarray}\label{gfStir1}
\dfrac{t^m}{m!}=\sum_{n=m}^\infty S_{n,m}\dfrac{(1-e^{-t})^n}{n!}
\end{eqnarray}
where $S_{n,m}$ denotes the Stirling numbers of the first kind. From \eqref{gen-expansions1} and \eqref{gfStir1}, we interchange sums
\begin{eqnarray*}\label{FBS}
F_f(e^{-t})=\sum_{m=0}^\infty B_{m,f}\sum_{n=m}^\infty S_{n,m}\dfrac{(1-e^{-t})^n}{n!}= B_{0,f}  + \sum_{n=1}^\infty \dfrac{(1-e^{-t})^n}{n!}\sum_{m=1}^{n}S_{n,m}B_{m,f}
\end{eqnarray*}
and thus
\begin{eqnarray}\label{xi-f}
\xi_f(n)=\dfrac{1}{n!}\sum_{m=0}^{n-1}S_{n,n-m}B_{n-m,f}.
\end{eqnarray}
Next, from $S_{n,n}=1$ and the recursion $S_{n+1,m}=S_{n,m-1}+nS_{n,m}$ we have
\begin{eqnarray}\label{Stirasymp}
S_{n,n-m}=\dfrac{n^{2m}}{2^mm!}\left(1-\dfrac{\beta_1(m)}{n}+\dfrac{\beta_2(m)}{n^2}+\cdots\right)
\end{eqnarray}
with computable coefficients $\beta_1(m)=\frac{2m^2+m}{3},\;\beta_2(m),\cdots$. From \eqref{key}, it follows
\begin{eqnarray}
\dfrac{B_{n-m,f}}{B_{n,f}} = \left(\dfrac{bk_\nu^2\pi^2}{M^2}\right)^{m}\left(\dfrac{(n-m)!}{n!}\right)^2\left(1+O\left( \dfrac{1}{n}  \right)\right)
\end{eqnarray}
and by (\ref{Stirfor1})
\begin{eqnarray}\label{est21}
\left(\dfrac{(n-m)!}{n!}\right)^2=\frac{1}{n^{2m}} \left(1+\dfrac{m(m-1)}{n}+\dfrac{3m^4-4m^3+m}{6n^2}+\cdots\right).
\end{eqnarray}
Finally, we combine \eqref{xi-f}--\eqref{est21} to obtain 
\begin{align*}
\xi_f(n) &= \dfrac{B_{n,f}}{n!}\sum_{m=0}^{n-1}\dfrac{1}{m!}\left(\dfrac{bk_\nu^2\pi^2}{2M^2}\right)^m\left(1+O\left( \dfrac{1}{n} \right)\right)\notag\\&\qquad \qquad \qquad \times\left(1-\dfrac{\beta_1(m)}{n}+\dfrac{\beta_2(m)}{n^2}+\cdots\right)\left(1+\dfrac{m(m-1)}{n}+\dfrac{3m^4-4m^3+m}{6n^2}+\cdots\right)\notag\\
&= \dfrac{B_{n,f}}{n!}\left(\sum_{m=0}^\infty\dfrac{1}{m!}\left(\dfrac{bk_\nu^2\pi^2}{2M^2}\right)^m+O\left(\dfrac{1}{n}\right)\right) \notag \\
&= \dfrac{B_{n,f}}{n!}e^{\frac{bk_\nu^2\pi^2}{2M^2}}\left(1+O\left(\dfrac{1}{n}\right)\right).
\end{align*}
The result (\ref{mainasy}) now follows from (\ref{key}). Now for fixed $f$, we have from \eqref{Cnf} and \eqref{Lfunc} 
\begin{equation}\label{CtoG}
C_{n,f} =(-1)^\nu G_f^{(\nu)}(k_\nu)\left(\dfrac{M}{2\pi}\right)^{2n+\nu+1}\dfrac{(2n+\nu)!}{k_\nu^{2n+\nu+1}\sqrt{M}}\left(1+ \frac{1}{G_f^{(\nu)}(k_\nu)} \sum_{k=k_{\nu} + 1}^\infty\dfrac{G_f^{(\nu)}(k)}{k^{2n+\nu+1}}\right).
\end{equation}
Next, \eqref{Gfk} implies 
\begin{eqnarray*}
\left | \dfrac{G_f^{(\nu)}(k)}{G_f^{(\nu)}(k_\nu)} \right | \leq M_{f,\nu}
\end{eqnarray*}
and this yields
\begin{eqnarray}\label{Gbound1}
\left| \frac{1}{G_f^{(\nu)}(k_\nu)} \sum_{k=k_{\nu} + 1}^\infty\dfrac{G_f^{(\nu)}(k)}{k^{2n+\nu+1}}\right|\leq M_{f,\nu}\left(\zeta(2n+\nu+1)-1\right)<1
\end{eqnarray}
for $n \geq N_{f, \nu}^{\text{(max)}}$ where $N_{f, \nu}^{\text{(max)}}$ is as in \eqref{Gbound2}. Clearly, we can choose $0\leq N_{f, \nu} \leq N_{f, \nu}^{\text{(max)}}$ satisfying \eqref{Gbound1} for $n\geq N_{f,\nu}$. For such an $N_{f, \nu}$, it now follows from (\ref{CtoG}) that $C_{n,f}$ and $(-1)^{\nu} G_{f}^{(\nu)}(k_{\nu})$ have the same sign for all $n \geq N_{f,\nu}$. Next, we note using \eqref{Cnf} and \cite[Lemma 3.2]{akl} that
\begin{eqnarray}\label{CnfBer}
C_{n,f}=(-1)^{n+1}\dfrac{M^{2n+\nu}}{2n+\nu+1}\sum_{m=1}^M f(m)B_{2n+\nu+1}\left(\dfrac{m}{M}\right)
\end{eqnarray}
where for $k\geq 0$, $B_k(x)$ denotes the $k$th Bernoulli polynomial. Hence \eqref{CnfBer} implies that if $C_{n,f}$, or equivalently \eqref{Gbound}, has the same sign as $(-1)^{\nu} G_{f}^{(\nu)}(k_{\nu})$ for $0 \leq n < N_{f, \nu}$, then \eqref{BCnf-relation} and \eqref{xi-f} imply that $\xi_f(\ell)$ has the same sign as $(-1)^{\nu} G_{f}^{(\nu)}(k_{\nu})$ for all $\ell \geq 0$.
\end{proof}

\section{Examples}

In this section, we illustrate Theorem \ref{main} with three examples.

\subsection{Kontsevich-Zagier series for torus knots $T(3,2^t)$}
For $t \geq 2$, consider the Kontsevich-Zagier series associated to the family of torus knots $T(3,2^t)$
\begin{align} \label{kztorus} 
\mathscr{F}_t(q) & = (-1)^{h''(t)} q^{-h'(t)} \sum_{n=0}^{\infty} (q)_n \sum_{3 \sum_{\ell = 1}^{m(t) - 1} j_{\ell} \ell \,\equiv \,1\;(\tiny{\mbox{mod}} \; m(t))} (-1)^{\sum_{\ell = 1}^{m(t)-1} j_{\ell}}q^{\frac{-a(t) + \sum_{\ell=1}^{m(t) - 1} j_{\ell} \ell}{m(t)} + \sum_{\ell=1}^{m(t)-1} \binom{j_{\ell}}{2}} \nonumber \\
& \times \sum_{k=0}^{m(t)-1} \prod_{\ell=1}^{m(t) - 1} \begin{bmatrix} n + I(\ell \leq k) \\ j_{\ell} \end{bmatrix}
\end{align}
where 
\begin{eqnarray*}
h''(t)=\left\{\begin{array}{cc}
\frac{2^t-1}{3},&\mbox{if}\;t\;\mbox{is even},\\
\frac{2^t-2}{3},&\mbox{if}\;t\;\mbox{is odd},
\end{array}\right.\hspace{0.5cm}
h'(t)=\left\{\begin{array}{cc}
\frac{2^t-4}{3},&\mbox{if}\;t\;\mbox{is even},\\
\frac{2^t-5}{3},&\mbox{if}\;t\;\mbox{is odd},
\end{array}\right.\hspace{0.5cm}
a(t)=\left\{\begin{array}{cc}
\frac{2^{t-1}+1}{3},&\mbox{if}\;t\;\mbox{is even},\\
\frac{2^{t}+1}{3},&\mbox{if}\;t\;\mbox{is odd},
\end{array}\right.
\end{eqnarray*}
$m(t)=2^{t-1}$, $I(*)$ is the characteristic function and 
\begin{eqnarray*}
\begin{bmatrix}
n\\k
\end{bmatrix}
=\dfrac{(q)_n}{(q)_{n-k}(q)_k}
\end{eqnarray*}
is the $q$-binomial coefficient.\footnote{For $t=1$, one may define the sum over the $j_{\ell}$ to be 1 in (\ref{kztorus}) to recover (\ref{f}).} The expression $\mathscr{F}_t(q)$ matches the $N$th colored Jones polynomial for $T(3, 2^t)$ at a root of unity $q=e^{\frac{2\pi i}{N}}$, converges in a similar manner as $F(q)$ and is an element of $\mathcal{H}$ (see \cite{Be} for further details). The \textit{generalized Fishburn numbers} $\xi_t(n)$ are defined by
\begin{eqnarray*}
\mathscr{F}_t(1-q)= \sum_{n=0}^\infty\xi_t(n)q^n.
\end{eqnarray*}
An application of Theorem \ref{main} is the following. Note that (\ref{fishasy}) follows after taking $t=1$ and simplifying (for brevity, we only state the leading term). 
\begin{corollary}\label{genFishasymp1}
Let $t \geq 1$. As $n\rightarrow\infty$, we have 
\begin{equation} \label{genFish1}
\xi_t(n)\sim \dfrac{\sin(\frac{\pi}{2^t})}{2^t\sqrt{3\pi}}\left(\dfrac{3\cdot 2^t}{\pi}\right)^{2n+2}\dfrac{2^{2n+1}n!\sqrt{n}}{(3\cdot 2^{t+2})^n}e^{\frac{\pi^2}{3\cdot 2^{t+1}}}.
\end{equation}
Moreover, let $N_t \geq 0$ be the smallest integer such that 
\begin{eqnarray}\label{Gtbound}
\zeta(2n+2)<\sin\left(\dfrac{\pi}{2^t}\right)+1 
\end{eqnarray}
for $n\geq N_t$. If
\begin{equation}\label{Gbound3}
(-1)^{n}\left[B_{2n+2}\left(\frac{2^{t+1}-3}{3\cdot 2^{t+1}}\right)-B_{2n+2}\left(\frac{2^{t+1}+3}{3\cdot 2^{t+1}}\right)\right] \geq 0
\end{equation}
for all $0\leq n<N_t$, then $\xi_t(\ell) > 0$ for all $\ell \geq 0$ and $t \geq 1$.
\end{corollary}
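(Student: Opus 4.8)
The plan is to realize $\mathscr{F}_t(q)$ as an instance of \eqref{gen-strange} and then apply Theorem \ref{main}. First I would recall from \cite{Be} that $\mathscr{F}_t(q)$ satisfies \eqref{gen-strange} with $\nu=1$, $M=3\cdot 2^{t+1}$, $a=a(t)$, $b=3\cdot 2^{t+2}=2M$ and $f=f_t$, where $f_t\colon\mathbb{Z}\to\mathbb{C}$ is the even function of period $M$ supported on the four residues $\pm(2^{t+1}-3),\ \pm(2^{t+1}+3)\pmod{M}$ (distinct for every $t\ge 1$), with $f_t(2^{t+1}-3)=-\tfrac{1}{2}$ and $f_t(2^{t+1}+3)=\tfrac{1}{2}$; for $t=1$ this is $f_1=-\tfrac{1}{2}\bigl(\tfrac{12}{*}\bigr)$ and recovers \eqref{strid}. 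Here $a(t)$ is a positive integer for every $t\ge 1$. It then only remains to unwind the conclusion of Theorem \ref{main} for these data.

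Next I would compute $G^{(1)}_{f_t}(k)$ from \eqref{Gfk}. Writing $\frac{2\pi k(2^{t+1}\mp 3)}{M}=\frac{2\pi k}{3}\mp\frac{\pi k}{2^{t}}$ and applying the product-to-sum identity for cosines, the evenness of $f_t$ collapses the four-term sum to
\[
G^{(1)}_{f_t}(k)=-\frac{4}{\sqrt{M}}\,\sin\Bigl(\frac{2\pi k}{3}\Bigr)\sin\Bigl(\frac{\pi k}{2^{t}}\Bigr).
\]
Since $3\nmid 1$ and $2^{t}\nmid 1$ for $t\ge 1$, this is nonzero at $k=1$, so $k_1=1$ and $(-1)^{\nu}G^{(1)}_{f_t}(k_1)=-G^{(1)}_{f_t}(1)=\tfrac{2\sqrt{3}}{\sqrt{M}}\sin(\pi/2^{t})>0$. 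Substituting $\nu=1$, $k_1=1$, $M=3\cdot 2^{t+1}$, $b=3\cdot 2^{t+2}$ and this value of $G^{(1)}_{f_t}(1)$ into \eqref{mainasy}, and simplifying the resulting powers of $2,3,\pi$ along with $e^{bk_1^{2}\pi^{2}/(2M^{2})}=e^{\pi^{2}/(3\cdot 2^{t+1})}$, gives \eqref{genFish1}; specializing to $t=1$ recovers the leading term of \eqref{fishasy}.

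For the positivity statement I would rerun the second half of the proof of Theorem \ref{main}, but with the generic constant $M_{f,\nu}$ replaced by the sharper tail estimate made available by the factored form of $G^{(1)}_{f_t}(k)$. From $|\sin(2\pi k/3)|\le\tfrac{\sqrt{3}}{2}$ (which vanishes when $3\mid k$) and $|\sin(\pi k/2^{t})|\le 1$, one obtains
\[
\left|\frac{1}{G^{(1)}_{f_t}(k_1)}\sum_{k=2}^{\infty}\frac{G^{(1)}_{f_t}(k)}{k^{2n+2}}\right|\;\le\;\frac{\zeta(2n+2)-1}{\sin(\pi/2^{t})},
\]
which is $<1$ as soon as $\zeta(2n+2)<1+\sin(\pi/2^{t})$, i.e.\ for $n\ge N_t$; hence by \eqref{CtoG} the sign of $C_{n,f_t}$ agrees with that of $(-1)^{\nu}G^{(1)}_{f_t}(k_1)>0$ for all $n\ge N_t$. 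For $0\le n<N_t$, formula \eqref{CnfBer}, the support of $f_t$, and the reflection $B_k(1-x)=(-1)^{k}B_k(x)$ give
\[
C_{n,f_t}=(-1)^{n}\,\frac{M^{2n+1}}{2n+2}\left[B_{2n+2}\left(\frac{2^{t+1}-3}{3\cdot 2^{t+1}}\right)-B_{2n+2}\left(\frac{2^{t+1}+3}{3\cdot 2^{t+1}}\right)\right],
\]
so hypothesis \eqref{Gbound3} says precisely that $C_{n,f_t}\ge 0$ there, while a direct computation with $B_2$ gives $C_{0,f_t}=1$. Therefore $C_{n,f_t}\ge 0$ for all $n\ge 0$; since $a(t)>0$, \eqref{BCnf-relation} forces $B_{n,f_t}\ge a(t)^{n}/b^{n}>0$ for all $n\ge 0$; and since the $S_{n,m}$ in \eqref{gfStir1}--\eqref{xi-f} are nonnegative (by the recursion recalled just after \eqref{xi-f}), \eqref{xi-f} yields $\xi_t(\ell)>0$ for every $\ell\ge 0$. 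When $t=1$ one has $N_1=0$, the hypothesis is vacuous, and this re-proves the positivity of the Fishburn numbers (cf.\ Remark \ref{ev1}).

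The step I expect to be the main obstacle is this sharpening. The generic constant $M_{f,\nu}$ of Theorem \ref{main} only certifies the needed tail bound for $n$ with $\zeta(2n+2)<1+\tfrac{\sqrt{3}}{2}\sin(\pi/2^{t})$, a strictly stronger requirement than \eqref{Gtbound}, so on its own it does not justify the hypothesis of the corollary (which only controls $n<N_t$); one genuinely has to exploit the closed form of $G^{(1)}_{f_t}(k)$ to push the threshold down to \eqref{Gtbound}. A secondary point is that one must verify $a(t)>0$ and $C_{0,f_t}=1$ in order to upgrade the ``same sign as a positive number'' conclusion of Theorem \ref{main} to the strict inequality $\xi_t(\ell)>0$. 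The rest — extracting $f_t,a(t),b$ from the definition \eqref{kztorus} and the algebraic simplification of \eqref{mainasy} — is routine.
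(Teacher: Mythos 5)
Your proposal is correct and follows essentially the same route as the paper: invoke the strange identity \eqref{KZstrange} from \cite{Be}, compute $G^{(1)}_{\chi_t}(1)=-\sin(\pi/2^t)/\sqrt{2^{t-1}}$ so that $k_1=1$, apply Theorem \ref{main} for \eqref{genFish1}, sharpen the generic constant $M_{f,\nu}$ via the pointwise bound $|G^{(1)}_{\chi_t}(k)|\le 1/\sqrt{2^{t-1}}$ to reach the threshold \eqref{Gtbound} (this is precisely the paper's ``In fact'' step, so the obstacle you single out is exactly where the paper also departs from the generic bound), and collapse the four-term Bernoulli sum using $B_k(x)=(-1)^kB_k(1-x)$. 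The only slip is labelling the theta-series parameter as $a(t)$ rather than $(2^{t+1}-3)^2$ as in \eqref{KZstrange} --- harmless, since the argument only uses $a>0$ --- while your added checks ($C_{0,f_t}=1$ and nonnegativity of the $S_{n,m}$) usefully make explicit the strict positivity $\xi_t(\ell)>0$ that the paper leaves implicit.
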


\begin{proof}
The Kontsevich-Zagier series $\mathscr{F}_t(q)$ satisfies the strange identity \cite[Proposition 2.4]{Be}\footnote{Taking $t=1$ in (\ref{KZstrange}) and (\ref{chit}) recovers (\ref{strid}).}
\begin{eqnarray} \label{KZstrange}
\mathscr{F}_t(q)``="\theta^{(1)}_{(2^{t+1} - 3)^2, 3\cdot 2^{t+2}, \chi_{t}}(q)
\end{eqnarray}
where
\begin{equation} \label{chit}
\chi_t(n) :=
\begin{cases}
-\frac{1}{2} &\text{if $n \equiv 2^{t+1}-3$, $3+2^{t+2} \; (\text{mod}\; 3\cdot2^{t+1}),$} \\
\frac{1}{2} &\text{if $n \equiv 2^{t+1} + 3$, $2^{t+2} - 3 \; (\text{mod}\; 3\cdot2^{t+1}),$} \\
0 &\text{otherwise.}
\end{cases}
\end{equation} 
Note that $\chi_t$ is an even function with period $M=3\cdot 2^{t+1}$. Next, we claim that $k_1=1$. To see this, observe that \eqref{Gfk} and \eqref{chit} yield
\begin{eqnarray}\label{Gkt}
G_{\chi_t}^{(1)}(1)&=&-\dfrac{1}{\sqrt{3\cdot 2^{t+1}}}\left\{\cos\left(\frac{2\pi(2^{t+1}-3)}{3\cdot 2^{t+1}}\right)+\cos\left(\frac{2\pi(3+2^{t+2})}{3\cdot 2^{t+1}}\right)\right.\notag\\&&\hspace{1cm}\left.-\cos\left(\frac{2\pi(2^{t+1}+3)}{3\cdot 2^{t+1}}\right)-\cos\left(\frac{2\pi(2^{t+2}-3)}{3\cdot 2^{t+1}}\right)\right\} \notag \\
&=& -\dfrac{1}{\sqrt{2^{t-1}}}\sin\left(\dfrac{\pi}{2^t}\right)
\end{eqnarray}
which is non-zero for any $t\geq 1$. By Theorem \ref{main}, (\ref{KZstrange}) and (\ref{Gkt}), (\ref{genFish1}) follows. To deduce the positivity statement for $\xi_{t}(n)$, we first note using \eqref{Mbound} and \eqref{Gkt} that
\begin{eqnarray*}
M_{\chi_t,1}=\dfrac{8}{\sqrt{3\cdot 2^{t+1}} \,\, \Big \lvert G_{\chi_t}^{(1)}(k_\nu) \Big \lvert}=\dfrac{4}{\sqrt{3}\sin\left(\frac{\pi}{2^t}\right)}.
\end{eqnarray*}
Thus, \eqref{Gbound2} implies that $N_t^{(\text{max})}:=N_{\chi_t,1}^{(\text{max})}\geq 0$ is the smallest integer satisfying
\begin{eqnarray*}
\zeta(2n+2)<\dfrac{\sqrt{3}}{4}\sin\left(\dfrac{\pi}{2^t}\right)+1
\end{eqnarray*}
for $n \geq N_t^{(\text{max})}$. In fact,
\begin{eqnarray*}
\Big \lvert G_{\chi_t}^{(1)}(k) \Big \rvert \leq \dfrac{1}{\sqrt{2^{t-1}}}
\end{eqnarray*}
and thus we obtain using \eqref{Gkt}
\begin{eqnarray*}
\left|\dfrac{1}{G_{\chi_t}^{(1)}(1)}\sum_{k=2}^\infty\dfrac{G_{\chi_t}^{(1)}(k)}{k^{2n+2}}\right|\leq \dfrac{\zeta(2n+2)-1}{\sin\left(\frac{\pi}{2^t}\right)}<1
\end{eqnarray*}
for $n\geq N_t$ with $0\leq N_t\leq N_t^{\text{(max)}}$. Using Theorem \ref{main} and the fact that 
\begin{eqnarray} \label{bk}
B_k(x)=(-1)^kB_k(1-x)
\end{eqnarray}
for $k\geq 0$, it now follows that for all non-negative integers $\ell$, $\xi_t(\ell)> 0$ if \eqref{Gbound3} is non-negative for $0\leq n \,<\,  N_t$. 
\end{proof}

\begin{remark} \label{ev1}
Using Corollary \ref{genFishasymp1}, we verify \eqref{Gbound3} for $0\leq n < N_t$ to confirm that $\xi_t(\ell)> 0$ for all $\ell \geq 0$ and $1 \leq t \leq 500$. In particular, this shows that $\xi(\ell)>0$ for all $\ell \geq 0$. In Table \ref{Nt}, we list values of $N_t$ for $1 \leq t \leq 10$.

\begin{table}[h]
{\tabulinesep=1mm
\begin{tabu}{|c|l|l|l|l|l|l|l|l|l|l|} \hline
$t$ & 1 & 2 & 3 & 4 & 5 & 6 & 7 & 8 & 9 & 10 \\ \hline
$N_t$ & 0 & 0 & 1 & 1 & 1 & 2 & 2 & 3 & 3 & 4 \\ \hline
\end{tabu}} 
\captionsetup{justification=centering}
\caption{\label{Nt} List of values of $N_t$ for $1 \leq t \leq 10$}
\end{table}
\end{remark}
\subsection{Kontsevich-Zagier series for torus knots $T(2,2m+1)$}
Let $m\in\mathbb{N}$. For $0\leq \ell\leq m-1$, define the Kontsevich-Zagier series for the torus knot $T(2,2m+1)$ as follows:
\begin{eqnarray*}
X_m^{(\ell)}(q):=\sum_{k_1,k_2,\dotsc,k_m=0}^\infty (q)_{k_m} q^{k_1^2+\cdots+k_{m-1}^2+k_{\ell+1}+\cdots+k_{m-1}}\prod_{i=1}^{m-1}\begin{bmatrix} k_{i+1} + \delta_{i,\ell} \\ k_{i} \end{bmatrix}
\end{eqnarray*}
where $\delta_{i,\ell}$ is the characteristic function. The expression $X_m^{(\ell)}(q)$ matches the $N$th colored Jones polynomial for $T(2, 2m+1)$ when $\ell=0$ and $q=e^{\frac{2\pi i}{N}}$ and is an element of $\mathcal{H}$. Write 

\begin{eqnarray*}
X_m^{(\ell)}(1-q) =: \sum_{n=0}^{\infty} \xi_{\ell, m}(n) q^n.
\end{eqnarray*}
Another application of Theorem \ref{main} is the following. Observe that (\ref{fishasy}) also follows by choosing $m=1$ and $\ell = 0$ and simplifying as $X_{1}^{(0)}(q) = F(q)$.

\begin{corollary} \label{genFishasymp2}
Let $m\in\mathbb{N}$ and $0\leq \ell\leq m-1$. As $n\rightarrow\infty$, we have 
\begin{equation} \label{genFish2}
\xi_{\ell, m}(n)\sim \sin\left(\frac{\pi(\ell+1)}{2m+1}\right)\left(\dfrac{2m+1}{\pi^2}\right)^{n+1}\dfrac{2^{n+3}n!\sqrt{n}}{\sqrt{\pi}}e^{\frac{\pi^2}{8m+4}}.
\end{equation}
Moreover, let $N_{m,\ell} \ge 0$ be the smallest integer such that 
\begin{eqnarray}\label{GXbound}
\zeta(2n+2)<\sin\left(\dfrac{\pi(\ell+1)}{2m+1}\right)+1 
\end{eqnarray}
for $n\geq N_{m,\ell}$. If
\begin{equation}\label{Gbound4}
(-1)^{n}\left[B_{2n+2}\left(\frac{2m-2\ell-1}{8m+4}\right)-B_{2n+2}\left(\frac{2m+2\ell+3}{8m+4}\right)\right] \geq 0
\end{equation}
for all $0\leq n<N_{m,\ell}$, then $\xi_{\ell,m}(k) > 0$ for all $k \geq 0$ and $0\leq \ell\leq m-1$.
\end{corollary}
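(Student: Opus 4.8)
The plan is to follow the proof of Corollary~\ref{genFishasymp1} almost verbatim; the only genuinely new ingredient is the strange identity for $X_m^{(\ell)}(q)$. The first step is to establish that
\begin{equation*}
X_m^{(\ell)}(q)\ ``="\ \theta^{(1)}_{(2m-2\ell-1)^2,\,16m+8,\,\psi_{m,\ell}}(q),
\end{equation*}
where $\psi_{m,\ell}$ is the even function of period $M=8m+4$ with $\psi_{m,\ell}(n)=-\tfrac12$ for $n\equiv\pm(2m-2\ell-1)\pmod{8m+4}$, $\psi_{m,\ell}(n)=\tfrac12$ for $n\equiv\pm(2m+2\ell+3)\pmod{8m+4}$, and $\psi_{m,\ell}(n)=0$ otherwise; here $b=2M=16m+8$ and $a=(2m-2\ell-1)^2\equiv(2m+2\ell+3)^2\pmod b$, so the exponents $(n^2-a)/b$ are nonnegative integers on the support of $\psi_{m,\ell}$. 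For $m=1$, $\ell=0$ one has $X_1^{(0)}(q)=F(q)$ and $\psi_{1,0}=-\tfrac12\bigl(\tfrac{12}{\cdot}\bigr)$, so this specializes to \eqref{strid}; in general I would derive it from the defining multisum by a Rosso--Jones/Bailey-pair manipulation together with an Euler--Maclaurin argument at roots of unity, exactly as for the $T(3,2^t)$ family in \eqref{KZstrange}. Granting the identity, we are in the setting \eqref{gen-strange} of Theorem~\ref{main} with $\nu=1$.

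The asymptotic \eqref{genFish2} is then a direct specialization of \eqref{mainasy}, the only input being $G^{(1)}_{\psi_{m,\ell}}(1)$. In \eqref{Gfk} the four cosines collapse in pairs via $\cos(2\pi-x)=\cos x$, and then $\cos B-\cos A=-2\sin\tfrac{A+B}{2}\sin\tfrac{B-A}{2}$ with $A=\tfrac{\pi(2m-2\ell-1)}{2(2m+1)}$ and $B=\tfrac{\pi(2m+2\ell+3)}{2(2m+1)}$ (so that $\tfrac{A+B}{2}=\tfrac{\pi}{2}$ and $\tfrac{B-A}{2}=\tfrac{\pi(\ell+1)}{2m+1}$) gives
\begin{equation*}
G^{(1)}_{\psi_{m,\ell}}(1)=-\frac{2}{\sqrt{2m+1}}\,\sin\!\left(\frac{\pi(\ell+1)}{2m+1}\right).
\end{equation*}
Since $0\le\ell\le m-1$ forces $0<\tfrac{\pi(\ell+1)}{2m+1}<\tfrac{\pi}{2}$, this is nonzero, so $k_1=1$. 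Substituting $M=8m+4$, $b=16m+8$, $k_1=1$ and this value into \eqref{mainasy}, and simplifying the powers of $2$ and of $2m+1$ together with $e^{bk_1^2\pi^2/(2M^2)}=e^{\pi^2/(8m+4)}$, yields \eqref{genFish2}.

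For the positivity statement I would mirror the second half of the proof of Corollary~\ref{genFishasymp1}. Here $\#\{0\le j\le M-1:\psi_{m,\ell}(j)\ne0\}=4$ and $\max|\psi_{m,\ell}|=\tfrac12$, so \eqref{Mbound} gives $M_{\psi_{m,\ell},1}=\dfrac{4}{|G^{(1)}_{\psi_{m,\ell}}(1)|\sqrt{8m+4}}=\dfrac{1}{\sin(\pi(\ell+1)/(2m+1))}$, whence \eqref{Gbound2} identifies $N^{(\mathrm{max})}_{\psi_{m,\ell},1}$ as the least $n$ with $\zeta(2n+2)<\sin(\pi(\ell+1)/(2m+1))+1$, i.e.\ \eqref{GXbound}. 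Since $(-1)^\nu G^{(1)}_{\psi_{m,\ell}}(1)=-G^{(1)}_{\psi_{m,\ell}}(1)>0$, Theorem~\ref{main} gives $\xi_{\ell,m}(k)>0$ for all $k\ge0$ provided \eqref{Gbound} is nonnegative for $0\le n<N_{m,\ell}$. Finally, using \eqref{CnfBer}, the evenness of $\psi_{m,\ell}$, and $B_{2n+2}(x)=B_{2n+2}(1-x)$, each pair $\{r,M-r\}$ contributes $2\psi_{m,\ell}(r)B_{2n+2}(r/M)$ to $\sum_{j=1}^{M}\psi_{m,\ell}(j)B_{2n+2}(j/M)$, so the quantity \eqref{Gbound} equals $(-1)^{n}\bigl[B_{2n+2}(\tfrac{2m-2\ell-1}{8m+4})-B_{2n+2}(\tfrac{2m+2\ell+3}{8m+4})\bigr]$, which is precisely the left-hand side of \eqref{Gbound4}; this completes the reduction.

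I expect the one real obstacle to be the first step. Unlike \eqref{KZstrange}, the strange identity for $X_m^{(\ell)}(q)$ with general $0\le\ell\le m-1$ is not recorded above; for $\ell=0$ it is essentially classical (Hikami's asymptotic expansion of the colored Jones polynomial of $T(2,2m+1)$ via the Rosso--Jones formula), but the $\ell$-shifted multisum needs its own manipulation to be matched with $\theta^{(1)}_{a,b,\psi_{m,\ell}}(q)$. Once that identity is available, everything above is routine trigonometric and Stirling bookkeeping parallel to the proof of Corollary~\ref{genFishasymp1}.
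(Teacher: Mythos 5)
Your proposal is correct and follows essentially the same route as the paper: identify the strange identity, compute $G^{(1)}(1)=-\tfrac{2}{\sqrt{2m+1}}\sin\bigl(\tfrac{\pi(\ell+1)}{2m+1}\bigr)$ so that $k_1=1$, specialize Theorem \ref{main}, and reduce \eqref{Gbound} to \eqref{Gbound4} via $B_{2n+2}(x)=B_{2n+2}(1-x)$. The one step you flag as the real obstacle is not one in the paper: the strange identity you conjecture (with your $\psi_{m,\ell}$ coinciding exactly with the paper's $\chi_m^{(\ell)}$ and $b=8(2m+1)$) is Hikami's Eqn.\ (15) in \cite{hikami2}, which the paper simply cites rather than rederives.
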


\begin{proof}
Hikami \cite[Eqn. (15)]{hikami2} established the strange identity
\begin{eqnarray}\label{strangehikami}
X_m^{(\ell)}(q)``=" \theta^{(1)}_{(2m-2\ell-1)^2, 8(2m+1), \chi_{m}^{(\ell)}}(q)
\end{eqnarray}
where 
\begin{equation} \label{genchi}
\chi_{m}^{(\ell)}(n):=
\begin{cases}
-\frac{1}{2} &\text{if $n \equiv 2m-2\ell-1$, $6m+2\ell+5$ $\pmod{8m+4}$,} \\
\frac{1}{2} &\text{if $n \equiv 2m+2\ell+3$, $6m-2\ell+1$ $\pmod{8m+4}$,} \\
0 &\text{otherwise.}
\end{cases}
\end{equation}
Note that $\chi_{m}^{(\ell)}(n)$ is an even function with period $M = 8m+4$. Next, we claim that $k_1 = 1$. To see this, observe that \eqref{Gfk} and \eqref{genchi} yield
\begin{eqnarray}\label{GXt}
G_{\chi_{m}^{(\ell)}}^{(1)}(1)&=&-\dfrac{1}{\sqrt{8m+4}}\left\{\cos\left(\frac{2\pi(2m-2\ell-1)}{8m+4}\right)+\cos\left(\frac{2\pi(6m+2\ell+5)}{8m+4}\right)\right.\notag\\
&&\hspace{1cm}\left.-\cos\left(\frac{2\pi(2m+2\ell+3)}{8m+4}\right)-\cos\left(\frac{2\pi(6m-2\ell+1)}{8m+4}\right)\right\} \notag \\
&=& -\dfrac{2}{\sqrt{2m+1}}\sin\left(\frac{\pi(\ell+1)}{2m+1}\right)
\end{eqnarray}
which is non-zero for any $m\in\mathbb{N}$ and $0\leq \ell\leq m-1$. By Theorem \ref{main}, (\ref{strangehikami}) and (\ref{GXt}), (\ref{genFish2}) follows. To deduce the positivity statement for $\xi_{\ell, m}(n)$, we first note using \eqref{Mbound} and \eqref{GXt} that
\begin{eqnarray*}
M_{\chi_{m}^{(\ell)},1}=\dfrac{8}{\sqrt{8m+4} \,\, \Big \lvert G_{\chi_{m}^{(\ell)}}^{(1)}(k_\nu) \Big \rvert } =\dfrac{2}{\sin\left(\frac{\pi(\ell+1)}{2m+1}\right)}.
\end{eqnarray*}
Thus, \eqref{Gbound2} implies that $N_{m,\ell}^{(\text{max})}:=N_{\chi_{8m+4}^{(\ell)},1}^{(\text{max})}\geq 0$ is the smallest integer satisfying 
\begin{eqnarray}
\zeta(2n+2)<\dfrac{1}{2}\sin\left(\dfrac{\pi(\ell+1)}{2m+1}\right)+1
\end{eqnarray}
for $n \geq N_{m,\ell}^{(\text{max})}$. In fact,
\begin{eqnarray*}
\Big \lvert  G_{\chi_{m}^{(\ell)}}^{(1)}(k) \Big \rvert  \leq \dfrac{2}{\sqrt{2m+1}}
\end{eqnarray*}
and thus we obtain using \eqref{GXt}
\begin{eqnarray*}
\left|\dfrac{1}{G_{\chi_{m}^{(\ell)}}^{(1)}(1)}\sum_{k=2}^\infty\dfrac{G_{\chi_{m}^{(\ell)}}^{(1)}(k)}{k^{2n+2}}\right|\leq \dfrac{\zeta(2n+2)-1}{\sin\left(\frac{\pi(\ell+1)}{2m+1}\right)}<1
\end{eqnarray*}
for $n\geq N_{m,\ell}$ with $0\leq N_{m,\ell}\leq N_{m,\ell}^{\text{(max)}}$. Using Theorem \ref{main} and (\ref{bk}), it now follows that for all non-negative integers $k$, $\xi_{\ell,m}(k)> 0$ if \eqref{Gbound4} is non-negative for $0\leq n < N_{m,\ell}$. 
\end{proof}

\begin{remark} \label{ev2}
Using Corollary \ref{genFishasymp2}, we verify \eqref{Gbound4} for $0\leq n < N_{m,\ell}$ to confirm that $\xi_{\ell,m}(k)> 0$ for all $k \geq 0$, $1 \leq m \leq 500$ and $0\leq \ell \leq m-1$. In Table \ref{Nml}, we list values of $N_{m,\ell}$ for $1 \leq m \leq 5$ and $0\leq \ell \leq m-1$.

\begin{table}[h]
\begin{tabular}{|l|l|l|l|l|l|l|l|l|l|l|l|l|l|l|l|}
 \hline
 \multicolumn{1}{|c|}{$m$} &
 \multicolumn{1}{|c|}{$1$} &
 \multicolumn{2}{|c|}{$2$} &
 \multicolumn{3}{|c|}{$3$} &
 \multicolumn{4}{|c|}{$4$} &
 \multicolumn{5}{|c|}{$5$} \\
 \hline
 $\ell$ & 0 & 0 & 1 & 0 & 1 & 2 & 0 & 1 & 2 & 3 & 0 & 1 & 2 & 3 & 4\\
 \hline
 $N_{m,\ell}$ & 0 & 1 & 0 & 1 & 0 & 0 & 1 & 1 & 0 & 0 & 1 & 1 & 0 & 0 & 0\\
 \hline
\end{tabular}
\captionsetup{justification=centering}
\caption{\label{Nml} List of values of $N_{m,\ell}$ for $1 \leq m \leq 5$ and $0\leq \ell \leq m-1$}
\end{table}
\end{remark}
\FloatBarrier

\begin{remark} \label{checklm}
In fact, we can determine an infinite number of $m$ and $0\leq \ell \leq m-1$ such that $\xi_{\ell,m}(k)$ is positive for every $k \geq 0$. Let us put $\ell=cm+d$. Then
\begin{enumerate}
\item It turns out that in order for $\ell\in\mathbb{Z}$, $c$ and $d$ must be rational numbers in their reduced forms such that $c=\frac{p_1}{q_1}$ and $d=\frac{p_2}{q_1}$ so that
\begin{eqnarray}\label{cond1}
p_1m\equiv -p_2\!\!\!\!\pmod{q_1}.
\end{eqnarray}
    \item Let $m_0$ be the smallest non-negative integer satisfying the congruence in \eqref{cond1}. Then with the choices of $c$ and $d$ as in (1) and using \eqref{GXbound} with $N_{m,\ell}=0$, we have
    \begin{eqnarray}\label{cond2}
    \max\left(0,\frac{2m_0-3}{4}\right)\leq cm_0+d \leq m_0-1, \quad \text{and}\quad \frac{1}{2}\leq c\leq 1.  
    \end{eqnarray}
\end{enumerate}
Equations \eqref{cond1} and \eqref{cond2} can now be used to determine an infinite family of $m$ and $0\leq \ell \leq m-1$ such that $\xi_{\ell,m}(k)>0$ for all $k\geq 0$ and $m\geq 1$. For example, let us choose $c=1\;(p_1=1,\;q_1=1)$. Then \eqref{cond1} and \eqref{cond2} force $m_0=1$ and $d=-1$. Thus, $\xi_{m-1,m}(k)>0$ for all $k$. Similarly, if we choose $c=\frac{1}{2}\;(p_1=1,\;q_1=2)$, then \eqref{cond1} implies that $m\equiv 1\pmod{2}$. This combined with \eqref{cond2} force $m_0=1$ and $d=-\frac{1}{2}$ so that we have $\xi_{\frac{m-1}{2},m}(k)>0$ for all $k\geq 0$ and integers $m\equiv 1\!\!\pmod{2}$. 
\end{remark}
 
\subsection{An example with $\nu=0$} For $k\geq 1$, let $\mathscr{G}_k(q)$ denote the $q$-series
\begin{eqnarray}\label{case0}
\mathscr{G}_k(q):=\sum_{n_k\geq n_{k-1}\geq\cdots\geq n_{1}\geq 0}q^{n_k+2n_{k-1}^2+n_{k-1}+\cdots+2n_1^2+2n_1}(q;q^2)_{n_k}\begin{bmatrix}
n_k\\n_{k-1}
\end{bmatrix}_{q^2}\cdots\begin{bmatrix}
n_2\\n_{1}
\end{bmatrix}_{q^2}
\end{eqnarray}
and write 
\begin{equation*}
\mathscr{G}_k(1-q)=:\sum_{n=0}^{\infty} \xi_{\mathscr{G}_k}(n)q^n.    
\end{equation*}
The $k=1$ case of (\ref{case0}) is of substantial historical and modern importance as it appears in Ramanujan's lost notebook (e.g., see \cite[Section 5]{Andrews}, \cite[Entry 9.5.2]{AB1} or \cite[page 419]{BAJY}).

\begin{corollary} \label{genG}
As $n\rightarrow\infty$, we have
\begin{eqnarray}\label{v0asymp}
\xi_{\mathscr{G}_k}(n)\sim \dfrac{\cos\left(\frac{\pi}{2(2k+1)}\right)2^{2n+2}n!}{\pi^{\frac{3}{2}}\sqrt{n}}\left(\dfrac{2k+1}{\pi^2}\right)^{n}e^{\frac{\pi^2}{8(2k+1)}}.
\end{eqnarray}
Moreover, $\xi_{\mathscr{G}_k}(n)>0$ for all $n\geq 0$ and $k \geq 1$.
\end{corollary}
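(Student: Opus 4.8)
The plan is to realize $\mathscr{G}_k(q)$ as an element of $\mathcal{H}$ satisfying a strange identity of the shape \eqref{gen-strange} and then to invoke Theorem \ref{main}. Concretely, I expect
\[
\mathscr{G}_k(q)\;``="\;\theta^{(0)}_{(2k)^2,\,4(2k+1),\,f_k}(q),
\]
where $f_k$ is the \emph{odd} function of period $M=4(2k+1)$ supported on the residues $(2k+1)\pm 1$ and $3(2k+1)\pm 1\pmod{4(2k+1)}$, with $f_k\bigl((2k+1)\pm1\bigr)=1$ and $f_k\bigl(3(2k+1)\pm1\bigr)=-1$. The multisum \eqref{case0}, with its chain of $q^{2}$-binomials and the factor $(q;q^{2})_{n_k}$, is a Nahm/Andrews--Gordon-type series, and I would derive this identity by iterating a suitable Bailey pair along a Bailey chain so as to reproduce the nested structure of \eqref{case0}, and then evaluating the resulting single sum at roots of unity in the spirit of the derivations of \eqref{KZstrange} and \eqref{strangehikami} in \cite{Be,hikami2} (or, if this $q$-series identity is already in the literature, by citing it). Proving this $q$-series identity is the main obstacle; once $f_k$, $a=(2k)^2$ and $b=4(2k+1)$ are in hand, the remaining steps are routine applications of Theorem \ref{main}. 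As consistency checks, $\sum_{0\le m<M}f_k(m)=1+1-1-1=0$, so $R_{f_k,M}=0$ as \eqref{gen-strange} forces, and $(2k)^2\equiv\bigl((2k+1)\pm1\bigr)^2\equiv\bigl(3(2k+1)\pm1\bigr)^2\pmod{4(2k+1)}$, so $\theta^{(0)}_{(2k)^2,4(2k+1),f_k}(q)$ is a genuine power series in $q$.

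Granting the strange identity, apply Theorem \ref{main} with $\nu=0$; the only quantity needing computation is $G^{(0)}_{f_k}(k_\nu)$. By \eqref{Gfk}, using $\sin(\pi-x)=\sin x$ and $\sin(\pi+x)=-\sin x$, each of the four sine terms contributes $\sin\!\bigl(\frac{\pi k}{2k+1}\bigr)$, so
\[
G^{(0)}_{f_k}(1)=\frac{2}{\sqrt{M}}\cdot 4\sin\!\Bigl(\frac{\pi k}{2k+1}\Bigr)=\frac{4}{\sqrt{2k+1}}\cos\!\Bigl(\frac{\pi}{2(2k+1)}\Bigr)\neq 0,
\]
whence $k_\nu=1$. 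Substituting $\nu=0$, $k_\nu=1$, $M=b=4(2k+1)$, $a=(2k)^2$ and this value of $G^{(0)}_{f_k}(1)$ into \eqref{mainasy}, and bookkeeping the powers of $2$, $\pi$ and $2k+1$ together with $e^{bk_\nu^2\pi^2/(2M^2)}=e^{\pi^2/(8(2k+1))}$, produces exactly \eqref{v0asymp}.

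For the positivity statement, I first compute $M_{f_k,0}$ from \eqref{Mbound}: with $\#\{m:f_k(m)\neq0\}=4$, $\max_m|f_k(m)|=1$ and the value of $G^{(0)}_{f_k}(1)$ above, it collapses to $M_{f_k,0}=1/\cos\!\bigl(\frac{\pi}{2(2k+1)}\bigr)$. Since $\nu=0$, the quantity $\zeta(2n+\nu+1)-1=\zeta(2n+1)-1$ is infinite at $n=0$, so $N^{(\mathrm{max})}_{f_k,0}\geq 1$; but $\zeta(3)-1<\frac12\leq\cos(\pi/6)\leq\cos\!\bigl(\frac{\pi}{2(2k+1)}\bigr)$ for every $k\geq 1$, so \eqref{Gbound2} forces $N^{(\mathrm{max})}_{f_k,0}=1$ and hence $N_{f_k,0}\in\{0,1\}$. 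To pin down $N_{f_k,0}=0$ I would check that the bound \eqref{Gbound1} already holds at $n=0$: by \eqref{Lfunc} with $n=0$ one has $\sum_{j\geq1}G^{(0)}_{f_k}(j)/j=\frac{2\pi}{\sqrt{M}}L(0,f_k)$, and by \eqref{CnfBer} (equivalently directly, since $\sum_{0\le m<M}f_k(m)=0$) $L(0,f_k)=C_{0,f_k}=-\frac{1}{M}\sum_{m}m f_k(m)=1$, so the left side of \eqref{Gbound1} at $n=0$ equals $|\pi-4\cos(\tfrac{\pi}{2(2k+1)})|/(4\cos(\tfrac{\pi}{2(2k+1)}))<1$ for all $k\geq1$. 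Hence $N_{f_k,0}=0$, the hypothesis of the sign part of Theorem \ref{main} is vacuous, and $\xi_{\mathscr{G}_k}(\ell)$ has the sign of $(-1)^{0}G^{(0)}_{f_k}(1)>0$ for all $\ell\geq0$ and $k\geq1$. (If one prefers to keep $N_{f_k,0}=1$, it suffices instead to note that the Bernoulli expression \eqref{Gbound} at $n=0$ equals $C_{0,f_k}=1>0$, the same positive sign, so the hypothesis is met.)
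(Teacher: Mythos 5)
Your overall strategy is the same as the paper's: identify $\mathscr{G}_k(q)$ with a partial theta series and feed it into Theorem \ref{main}. Your guessed identity is correct in substance: the paper invokes \cite[Example 5.2]{akl}, which gives $\mathscr{G}_k(q)=\sum_{n\geq 0}\chi_k(n)q^{(n^2-k^2)/(2k+1)}$ with $\chi_k$ supported on $\pm k,\pm(k+1)\pmod{4k+2}$; your $f_k$ with modulus $8k+4$ and support $2k,2k+2,6k+2,6k+4$ is exactly this theta series after the substitution $n\mapsto 2n$, and I have checked that your parameters $(a,b,M)=((2k)^2,8k+4,8k+4)$ and your value $G^{(0)}_{f_k}(1)=\frac{4}{\sqrt{2k+1}}\cos\bigl(\frac{\pi}{2(2k+1)}\bigr)$ reproduce \eqref{v0asymp} exactly, as they must since the theta function itself is unchanged. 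The genuine gap is that this identity is the entire content-bearing input and you leave it as an "expected" statement to be proved by an unspecified Bailey-chain argument or located in the literature. It is in the literature (and, importantly, it is an \emph{actual} identity valid for $|q|<1$, not merely a strange identity), so the gap is closable by citation, but as written the proof is incomplete at its crucial step.

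A second, smaller issue: your opening plan is to "realize $\mathscr{G}_k(q)$ as an element of $\mathcal{H}$," but $\mathscr{G}_k(q)\notin\mathcal{H}$ — the factor is $(q;q^2)_{n_k}$ rather than $(q)_{n_k}$, so $\mathscr{G}_k$ is not of the form $\sum_n A_{n,f}(q)(q)_n$ and the hypotheses of Theorem \ref{main} do not literally apply. The paper patches this by observing that $(e^{-t};e^{-2t})_{n_k}$ has an asymptotic expansion beginning with $t^{n_k}$, so the formal expansions \eqref{gen-expansions1} and \eqref{gen-expansions2} (and hence the whole proof of Theorem \ref{main}) remain valid for $\mathscr{G}_k$; your write-up needs this remark, or the appeal to Theorem \ref{main} is not justified. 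On the positivity argument your treatment is fine and in fact slightly sharper than the paper's: the paper takes $N_{\chi_k,0}=1$ and verifies the $n=0$ Bernoulli condition $(-1)\sum_m\chi_k(m)B_1(m/M)=1>0$ (your fallback), while you additionally show \eqref{Gbound1} holds already at $n=0$ via $L(0,f_k)=C_{0,f_k}=1$ and $4\cos\bigl(\frac{\pi}{2(2k+1)}\bigr)>\pi$, which lets you take $N_{f_k,0}=0$ and make the hypothesis vacuous; both routes are correct.
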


\begin{proof}
It was shown in \cite[Example 5.2]{akl} that 
\begin{eqnarray}\label{act-strange}
\mathscr{G}_k(q)=\sum_{n=0}^{\infty} \chi_{k}(n)q^{\frac{n^2-k^2}{2k+1}}
\end{eqnarray}
where
\begin{equation}\label{chi4k2}
\chi_{k}(n):=\begin{cases}
1 &n\equiv k,\;k+1\!\!\pmod{4k+2},\\
-1 &n\equiv -k,\;-k-1\!\!\pmod{4k+2},\\
0 &\text{otherwise}.
\end{cases}    
\end{equation}
Observe that \eqref{act-strange} is not a strange identity but an actual identity valid for $|q|<1$ and every odd order root of unity $q$ (see \cite[Example 3.2]{akl}). Although $\mathscr{G}_k(q) \not\in \mathcal{H}$, we note that when $q=e^{-t}$ with $t\rightarrow 0^+$, the expression $(q;q^2)_{n_k}$ in the right-hand side of \eqref{case0} will have an asymptotic expansion starting with $t^{n_k}$. Hence, the expansions \eqref{gen-expansions1} and \eqref{gen-expansions2} for this $q$-series as $t\rightarrow 0^+$ are still valid and we can apply Theorem \ref{main}. First, we have that $\chi_{k}(n)$ is an odd function with period $M = 4k+2$. Next, we claim that $k_0=1$. To see this, observe that for any $\ell\geq 1$, \eqref{Gfk} and \eqref{chi4k2} yield
\begin{eqnarray}\label{Gl}
G_{\chi_{k}}^{(0)}(\ell)&=&\dfrac{4}{\sqrt{4k+2}}\left[\sin\left(\dfrac{\pi k\ell}{2k+1}\right)+\sin\left(\dfrac{\pi (k+1)\ell}{2k+1}\right)\right]\notag\\
&=&\dfrac{8}{\sqrt{4k+2}}\sin\left(\dfrac{\pi\ell}{2}\right)\cos\left(\dfrac{\pi\ell}{2(2k+1)}\right)
\end{eqnarray}
and thus
\begin{eqnarray*}
G_{\chi_{k}}^{(0)}(1)=\frac{8}{\sqrt{4k+2}}\cos\left(\frac{\pi}{2(2k+1)}\right)
\end{eqnarray*} 
which is non-zero for all $k \geq 1$. By Theorem \ref{main}, \eqref{act-strange} and \eqref{Gl}, \eqref{v0asymp} follows. Next, using \eqref{Mbound} we get
\begin{eqnarray*}
M_{\chi_{k},0}=\dfrac{1}{\cos\left(\dfrac{\pi}{2(2k+1)}\right)}.
\end{eqnarray*}
As $\cos\left(\frac{\pi}{2(2k+1)}\right)$ is an increasing function for $k\geq 1$ and
\begin{eqnarray*}
M_{\chi_{1},0}\cdot(\zeta(3)-1)=0.233\cdots<1,
\end{eqnarray*}
we can choose $N_{\chi_{k},0}=1$ for all $k\geq 1$. To deduce the positivity statement for $\xi_{\mathscr{G}_k}(n)$, we need only show that
\begin{eqnarray}\label{nonneg}
\sum_{m=1}^{4k+2}\chi_{k}(m)B_1\left(\frac{m}{4k+2}\right) \leq 0
\end{eqnarray}
for all  $k\geq 1$. To prove \eqref{nonneg}, we first note that $B_1(x)=x-\frac{1}{2}$. Hence, \eqref{chi4k2} implies
\begin{eqnarray*}
\sum_{m=1}^{4k+2}\chi_{k}(m)B_1\left(\frac{m}{4k+2}\right)=-1.
\end{eqnarray*}
\end{proof}

\section{Other expansions and conjectures}
Other expansions for $F(q)$ frequently appear throughout the combinatorics literature. For example, we have \cite[A138265]{oeis}
\begin{eqnarray*}
F\left(\frac{1}{1+q} \right) = 1 + q + q^2 + 2q^3 + 5q^4 + 16q^5 + 61q^6 + 271q^7 + 1372q^8 + \cdots
\end{eqnarray*}
and \cite[A289312]{oeis}
\begin{eqnarray*} 
F\left(\frac{1-q}{1+q} \right) = 1 + 2q + 6q^2 + 26q^3 + 142q^4 + 946q^5 + 7446q^6 + 67658q^7 + 697118q^8 + \cdots.
\end{eqnarray*}
Using Theorem \ref{main}, we may deduce asymptotics for the coefficients of $F_f\left(\dfrac{1}{1+q}\right)$ and $F_f\left(\dfrac{1-q}{1+q}\right)$. Namely, if we write 
\begin{eqnarray*}
F_f\left(\dfrac{1}{1+q}\right)=:\sum_{n=0}^{\infty} g_f(n)q^n,
\end{eqnarray*}
then 
\begin{eqnarray} \label{fshift1}
g_f(n)\sim (-1)^\nu\left(\dfrac{M}{2\pi k_\nu}\right)^{2n+\nu+1}\dfrac{G_f^{(\nu)}(k_\nu)2^{2n+\nu} n! n^{\nu-\frac{1}{2}}}{b^n\sqrt{\pi M}}e^{-\frac{bk_\nu^2\pi^2}{2M^2}}.
\end{eqnarray}
This follows upon first noting
\begin{equation*} 
F_f\left(\dfrac{1}{1+q}\right) = F_f\left(1 - \dfrac{q}{1+q}\right) = \sum_{j=0}^{\infty} \xi_f(j)q^j \sum_{m=0}^{\infty} (-1)^m\binom{j+m-1}{m}q^m
\end{equation*}
and so
\begin{equation*}
g_f(n)=\sum_{\ell=0}^{n-1}(-1)^\ell\binom{n-1}{\ell}\xi_f(n-\ell),
\end{equation*}
then applying Theorem \ref{main}. Similarly, if
\begin{eqnarray*}
F_f\left(\dfrac{1-q}{1+q}\right)=:\sum_{n=0}^{\infty} h_f(n)q^n,
\end{eqnarray*}
then one can check
\begin{equation}\label{fshift2}
h_f(n)\sim (-1)^\nu\left(\dfrac{M}{2\pi k_v}\right)^{2n+\nu+1}\dfrac{G_f^{(\nu)}(k_\nu)2^{3n+\nu} n! n^{\nu-\frac{1}{2}}}{b^n\sqrt{\pi M}}.
\end{equation}

Asymptotics for the coefficients of $\mathscr{F}_t(q)$, $X_m^{(\ell)}(q)$ and $\mathscr{G}_k(q)$ with $q$ replaced by $\frac{1}{1+q}$ or $\frac{1-q}{1+q}$ now follow readily from (\ref{fshift1}), (\ref{fshift2}) and Corollaries \ref{genFishasymp1},  \ref{genFishasymp2} and \ref{genG}. Thus, all but finitely many coefficients are positive for $\mathscr{F}_t(q)$, $X_m^{(\ell)}(q)$ and $\mathscr{G}_k(q)$ where $q$ is replaced by $\frac{1}{1+q}$ or $\frac{1-q}{1+q}$. Interestingly, it appears numerically that more is true. Some supporting data is given below in Tables \ref{Ft2}--\ref{Gk}. 

\vspace{.05in}

\begin{table}[h]
{\tabulinesep=1mm
\begin{tabu}{|c|l|} \hline
$t=1$ & $1 + q + q^2 + 2 q^3 + 5 q^4 + 16 q^5 + 61 q^6 + 271 q^7 + 1372 q^8 + 
 7795 q^9 + 49093 q^{10}+\dotsc$ \\ \hline
$t=2$ & $1 + 3 q + 8 q^2 + 31 q^3 + 160 q^4 + 1029 q^5 + 7910 q^6 + 70658 q^7 + 718687 q^8 + \dotsc$ \\ \hline
$t=3$ & $1 + 7 q + 42 q^2 + 329 q^3 + 3395 q^4 + 43638 q^5 + 670663 q^6 + 11980513 q^7 + \dotsc$ \\ \hline
$t=4$ & $1 + 15 q + 190 q^2 + 3005 q^3 + 61885 q^4 + 1587420 q^5 + 48722721 q^6 + 1739070735 q^7 + \dotsc$  \\ \hline
$t=5$ & $ 1 + 31 q + 806 q^2 + 25637 q^3 + 1054465 q^4 + 54008696 q^5 + 3311724885 q^6 + \dotsc$\\ \hline
\end{tabu}} 
\caption{\label{Ft2} Coefficients for $\mathscr{F}_t(\frac{1}{1+q})$ for $1 \leq t \leq 5$}
\end{table}

\vspace{.05in}

\begin{table}[h]
{\tabulinesep=1mm
\begin{tabu}{|c|l|} \hline
$t=1$ & $1 + 2 q + 6 q^2 + 26 q^3 + 142 q^4 + 946 q^5 + 7446 q^6 + 67658 q^7 + 
 697118 q^8 + 8031586 q^9 + \dotsc$ \\ \hline
$t=2$ & $1 + 6q + 38 q^2 + 318 q^3 + 3406 q^4 + 44790 q^5 + 699126 q^6 + 12630702 q^7 + \dotsc$ \\ \hline
$t=3$ & $1 + 14 q + 182 q^2 + 2982 q^3 + 62734 q^4 + 1630174 q^5 + 50474886 q^6 + 1813113398 q^7 + \dotsc$ \\ \hline
$t=4$ & $1 + 30 q + 790 q^2 + 25590 q^3 + 1064590 q^4 + 54905390 q^5 + 3382387174 q^6 + \dotsc$  \\ \hline
$t=5$ & $1 + 62 q + 3286 q^2 + 211606 q^3 + 17496462 q^4 + 1797007566 q^5 + 220762565542 q^6 + \dotsc$\\ \hline
\end{tabu}} 
\caption{\label{Ft3} Coefficients for $\mathscr{F}_t(\frac{1-q}{1+q})$ for $1 \leq t \leq 5$}
\end{table}

\vspace{.05in}

\begin{table}[h]
{\tabulinesep=1mm
\begin{tabu}{|c|l|} \hline
$\ell=0$ & $ 1 + 5 q + 25 q^2 + 180 q^3 + 1725 q^4 + 20538 q^5 + 291571 q^6 + 4801844 q^7 + \dotsc$ \\ \hline
$\ell=1$ & $ 2 + 9 q + 45 q^2 + 330 q^3 + 3195 q^4 + 38286 q^5 + 545949 q^6 + 9020385 q^7 + \dotsc$ \\ \hline
$\ell=2$ & $ 3 + 12 q + 60 q^2 + 446 q^3 + 4350 q^4 + 52374 q^5 + 749294 q^6 + 12410001 q^7 + \dotsc$  \\ \hline
$\ell=3$ & $ 4 + 14 q + 70 q^2 + 525 q^3 + 5145 q^4 + 62139 q^5 + 890925 q^6 + 14779290 q^7 + \dotsc$\\ \hline
$\ell=4$ & $ 5 + 15 q + 75 q^2 + 565 q^3 + 5550 q^4 + 67134 q^5 + 963578 q^6 + 15997212 q^7+ \dotsc$\\ \hline
\end{tabu}} 
\caption{\label{Xml2} Coefficients for $X_5^{(\ell)}(\frac{1}{1+q})$ for $0 \leq  \ell \leq 4$}
\end{table}

\vspace{.05in}

\begin{table}[h]
{\tabulinesep=1mm
\begin{tabu}{|c|l|} \hline
$\ell=0$ & $1 + 10 q + 110 q^2 + 1650 q^3 + 32230 q^4 + 776666 q^5 + 22237534 q^6 + 737031746 q^7 + \dotsc$ \\ \hline
$\ell=1$ & $2 + 18 q + 198 q^2 + 3018 q^3 + 59598 q^4 + 1446210 q^5 + 41605014 q^6 + 1383694074 q^7 + \dotsc$ \\ \hline
$\ell=2$ & $3 + 24 q + 264 q^2 + 4072 q^3 + 81048 q^4 + 1976760 q^5 + 57067560 q^6 + 1902795528 q^7 + \dotsc$  \\ \hline
$\ell=3$ & $4 + 28 q + 308 q^2 + 4788 q^3 + 95788 q^4 + 2344076 q^5 + 67828068 q^6 + 2265402148 q^7 + \dotsc$\\ \hline
$\ell=4$ & $5 + 30 q + 330 q^2 + 5150 q^3 + 103290 q^4 + 2531838 q^5 + 73345162 q^6 + 2451727038 q^7 + \dotsc$\\ \hline
\end{tabu}} 
\caption{\label{Xml3} Coefficients for $X_5^{(\ell)}(\frac{1-q}{1+q})$ for $0 \leq \ell \leq 4$}
\end{table}

\vspace{.05in}

\begin{table}[h]
{\tabulinesep=1mm
\begin{tabu}{|c|l|} \hline
$k=1$ & $ 1 + q + 3 q^2 + 11 q^3 + 50 q^4 + 280 q^5 + 1892 q^6 + 15052 q^7 + 
 137957 q^8 + \dotsc$ \\ \hline
$k=2$ & $ 1 + 2 q + 8 q^2 + 42 q^3 + 293 q^4 + 2630 q^5 + 29054 q^6 + 
 380894 q^7 + 5773064 q^8 + \dotsc$ \\ \hline
$k=3$ & $ 1 + 3 q + 15 q^2 + 103 q^3 + 977 q^4 + 12137 q^5 + 186601 q^6 + 
 3411009 q^7 + 72158001 q^8+ \dotsc$  \\ \hline
$k=4$ & $ 1 + 4 q + 24 q^2 + 204 q^3 + 2454 q^4 + 39000 q^5 + 768720 q^6 + 
 18028512 q^7 + \dotsc$\\ \hline
$k=5$ & $ 1 + 5 q + 35 q^2 + 355 q^3 + 5180 q^4 + 100346 q^5 + 2413318 q^6 + 
 69085190 q^7 + \dotsc$\\ \hline
\end{tabu}} 
\caption{\label{Gk1} Coefficients for $\mathscr{G}_k(\frac{1}{1+q})$ for $1 \leq  k \leq 5$}
\end{table}

\vspace{.05in}

\begin{table}[h]
{\tabulinesep=1mm
\begin{tabu}{|c|l|} \hline
$k=1$ & $ 1 + 2 q + 6 q^2 + 34 q^3 + 278 q^4 + 2978 q^5 + 39302 q^6 + 
 615554 q^7 + 11151446 q^8 + \dotsc$ \\ \hline
$k=2$ & $ 1 + 4 q + 20 q^2 + 180 q^3 + 2420 q^4 + 42916 q^5 + 940244 q^6 + 
 24478804 q^7 + \dotsc$ \\ \hline
$k=3$ & $ 1 + 6 q + 42 q^2 + 518 q^3 + 9674 q^4 + 239302 q^5 + 7323946 q^6 + 
 266553414 q^7 + \dotsc$  \\ \hline
$k=4$ & $ 1 + 8 q + 72 q^2 + 1128 q^3 + 26952 q^4 + 855240 q^5 + 33608136 q^6 + 
 1571210280 q^7 + \dotsc$\\ \hline
$k=5$ & $ 1 + 10 q + 110 q^2 + 2090 q^3 + 60830 q^4 + 2355562 q^5 + 
 113032942 q^6 + 6454755274 q^7 + \dotsc$\\ \hline
\end{tabu}} 
\caption{\label{Gk} Coefficients for $\mathscr{G}_k\left(\frac{1-q}{1+q}\right)$ for $1 \leq  k \leq 5$}
\end{table}
\FloatBarrier

Based on the evidence in Remarks \ref{ev1} and \ref{ev2}, the computations in Remark \ref{checklm} and Tables \ref{Ft2}--\ref{Gk}, we make the following

\begin{conjecture} We have 
\begin{enumerate}
\item the coefficients of  $\mathscr{F}_t(1-q)$, $\mathscr{F}_t(\frac{1}{1+q})$ and $\mathscr{F}_t(\frac{1-q}{1+q})$ are positive for all $t \geq 1$.
\item the coefficients of $X_m^{(\ell)}(1-q)$, $X_m^{(\ell)}(\frac{1}{1+q})$ and $X_m^{(\ell)}(\frac{1-q}{1+q})$ are positive for all $m \in \mathbb{N}$ and $0 \leq \ell \leq m-1$.
\item the coefficients of $\mathscr{G}_k(\frac{1}{1+q})$ and    $\mathscr{G}_k (\frac{1-q}{1+q})$ are positive for all $k \geq 1$.
\end{enumerate}
\end{conjecture}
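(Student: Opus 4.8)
We organize the argument around the two kinds of substitutions appearing in the conjecture. The statement for $\mathscr{G}_k(1-q)$ is already unconditional (Corollary \ref{genG}), so the conjecture reduces to (i) the $q\mapsto 1-q$ series $\mathscr{F}_t(1-q)$ and $X_m^{(\ell)}(1-q)$ in parts (1)--(2), and (ii) the six M\"obius-type series $\mathscr{F}_t,X_m^{(\ell)},\mathscr{G}_k$ with $q$ replaced by $\tfrac{1}{1+q}$ or $\tfrac{1-q}{1+q}$. The plan is to settle (i) completely by upgrading Corollaries \ref{genFishasymp1} and \ref{genFishasymp2} from conditional to unconditional statements, and to reduce (ii) to a single uniform-positivity phenomenon, which is the genuine obstacle.

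For (i), the point is that the finiteness restriction ``$0\le n<N_t$'' in \eqref{Gbound3} (resp.\ ``$0\le n<N_{m,\ell}$'' in \eqref{Gbound4}) can be removed outright. The key input is the classical monotonicity property: \emph{for each $n\ge 1$ the map $x\mapsto(-1)^nB_{2n+2}(x)$ is strictly decreasing on $[0,\tfrac12]$.} This follows from $\tfrac{d}{dx}(-1)^nB_{2n+2}(x)=(-1)^n(2n+2)B_{2n+1}(x)$ together with the classical sign of odd-index Bernoulli polynomials, namely $(-1)^{n+1}B_{2n+1}(x)>0$ on $(0,\tfrac12)$. Now observe that the two Bernoulli arguments in \eqref{Gbound3} are symmetric about $\tfrac13$ with difference $\tfrac{1}{2^t}$, while those in \eqref{Gbound4} are symmetric about $\tfrac14$ with difference $\tfrac{\ell+1}{2m+1}$. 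Using \eqref{bk} to reflect the larger argument into $[0,\tfrac12]$ when necessary (this is needed only for $t=1$ in the $\mathscr{F}_t$ case, and never in the $X_m^{(\ell)}$ case, where both arguments already lie in $(0,\tfrac12)$ for every $0\le\ell\le m-1$), monotonicity forces the bracketed differences in \eqref{Gbound3} and \eqref{Gbound4} to be strictly positive for all $n\ge 1$; the remaining case $n=0$ is checked directly from $B_2(x)=x^2-x+\tfrac16$, giving $\tfrac{1}{3\cdot 2^t}>0$ and $\tfrac{\ell+1}{2(2m+1)}>0$ respectively. Hence the hypotheses of Corollaries \ref{genFishasymp1} and \ref{genFishasymp2} hold for \emph{all} admissible $n$, so $\xi_t(\ell)>0$ and $\xi_{\ell,m}(k)>0$ for all parameters and all indices, proving the $q\mapsto 1-q$ part of (1) and (2). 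This also subsumes Remarks \ref{ev1}, \ref{ev2} and \ref{checklm}.

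The M\"obius-type cases in (ii) are the hard part. Writing (as in Section 4) $g_f(n)=\sum_{\ell=0}^{n-1}(-1)^\ell\binom{n-1}{\ell}\xi_f(n-\ell)$ and the analogous signed-binomial expression for $h_f(n)$, we see that $g_f$ and $h_f$ are \emph{signed} binomial transforms of the sequence $\xi_f$; consequently the positivity of $\xi_f$ established above does \emph{not} by itself yield positivity of $g_f$ or $h_f$, since there is genuine cancellation. The asymptotics \eqref{fshift1} and \eqref{fshift2} only guarantee $g_f(n),h_f(n)>0$ for all sufficiently large $n$, with the threshold depending on the family parameter, so a finite numerical check (as in Tables \ref{Ft2}--\ref{Gk}) cannot close an infinite family. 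I would attack this in one of two ways. The first is analytic: make the dominance in \eqref{fshift1}--\eqref{fshift2} effective by bounding the subleading terms of the binomial transform against the leading term $\xi_f(n)$ (using the super-exponential growth $\xi_f(n)\sim(\text{const})^n n!\sqrt n$ from \eqref{mainasy}), thereby producing an \emph{explicit} index $n_0(f)$ beyond which positivity is automatic, together with a parameter-uniform sign control of the finitely many transforms $g_f(n),h_f(n)$ for $n<n_0(f)$ -- the latter step being the analogue of the Bernoulli reduction used in (i). The second is combinatorial: the Fishburn specializations $F(\tfrac{1}{1+q})$ and $F(\tfrac{1-q}{1+q})$ are the sequences recorded as OEIS A138265 and A289312, which carry combinatorial interpretations; constructing manifestly positive combinatorial models for the full families $\mathscr{F}_t,X_m^{(\ell)},\mathscr{G}_k$ under these substitutions would prove (ii) directly.

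The main obstacle is precisely the uniform sign control in (ii): unlike (i), there is no partial theta representation of the substituted series to feed into Theorem \ref{main}, and the cancellation in the signed binomial transform must be controlled uniformly in the family parameter ($t$, $(m,\ell)$, or $k$). I expect the analytic route to yield an effective $n_0$ without serious difficulty, so the crux is a uniform, parameter-independent argument for the residual low-order coefficients -- most plausibly via a combinatorial model generalizing A138265 and A289312, which I regard as the most promising but also the most substantial new ingredient required.
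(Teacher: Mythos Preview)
The paper offers no proof of this statement: it is recorded as a conjecture, supported only by the numerical checks in Remarks~\ref{ev1}, \ref{ev2}, \ref{checklm} and Tables~\ref{Ft2}--\ref{Gk}. So there is no paper proof to compare against; the question is simply whether your proposal succeeds.

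Your part (i) is correct and in fact goes strictly beyond the paper. The monotonicity claim is the classical fact that for $n\ge 1$ the polynomial $B_{2n+1}$ has only the zeros $0,\tfrac12,1$ in $[0,1]$, with $(-1)^{n+1}B_{2n+1}(x)>0$ on $(0,\tfrac12)$; hence $(-1)^nB_{2n+2}$ is strictly decreasing there. For $t\ge 2$ both arguments $\tfrac{2^{t+1}\mp 3}{3\cdot 2^{t+1}}$ lie in $(0,\tfrac12)$ and are ordered, so \eqref{Gbound3} is strictly positive for every $n\ge 1$; the reflection via \eqref{bk} handles $t=1$, and your $n=0$ computation $B_2(x_1)-B_2(x_2)=(x_1-x_2)(x_1+x_2-1)=\tfrac{1}{3\cdot 2^t}$ is correct. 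The same scheme works verbatim for \eqref{Gbound4} since $\tfrac{2m\mp(2\ell+1)-1+1}{8m+4}$ both lie strictly in $(0,\tfrac12)$ for $0\le\ell\le m-1$, with $n=0$ giving $\tfrac{\ell+1}{2(2m+1)}$. Feeding this into Corollaries~\ref{genFishasymp1} and \ref{genFishasymp2} yields $\xi_t(\ell)>0$ and $\xi_{\ell,m}(k)>0$ for \emph{all} parameters, which the paper only verified for $t,m\le 500$. This settles the $q\mapsto 1-q$ portions of (1) and (2) unconditionally.

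Your part (ii) is not a proof, and you say so. The two strategies you outline are reasonable, but neither is carried out. The analytic route has a real obstruction you do not quite name: even if one makes \eqref{fshift1}--\eqref{fshift2} effective to obtain an explicit $n_0(f)$, that threshold will grow with the family parameter (e.g.\ with $t$ or with $m$), so ``finitely many residual coefficients'' is still infinitely many across the family, and you would need a second uniform mechanism---of a quite different flavor from the Bernoulli monotonicity in (i)---to control them. The combinatorial route is plausible but speculative. So the proposal establishes a genuine partial result (the $1-q$ cases), while the M\"obius-type substitutions in (1)--(3) remain open, exactly as in the paper.
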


\section*{Acknowledgements}
The first author was an institute postdoctoral fellow at IIT Gandhinagar under the project IP/IITGN/MATH/AD/2122/15. He sincerely thanks the institute for the support. The second author was partially supported by SERB MATRICS grant MTR/2022/000659. The third author was supported by the Basic Science Research Program through the National Research Foundation of Korea (NRF) funded by the Ministry of Science and ICT (NRF-2019R1F1A1043415). The fourth author was partially supported by Enterprise Ireland CS20212030. The fourth author would also like to thank the Max-Planck-Institut f\"ur Mathematik for their support and hospitality during the completion of this paper. Finally, the authors thank Jeremy Lovejoy and the referee for helpful comments and suggestions.


\begin{thebibliography}{999}

\bibitem{ak}
S. Ahlgren, B. Kim, \emph{Dissections of a ``strange" function}, Int. J. Number Theory \textbf{11} (2015), no. 5, 1557--1562.

\bibitem{akl}
S. Ahlgren, B. Kim and J. Lovejoy, \emph{Dissections of strange $q$-series}, Ann. Comb. \textbf{23} (2019), no. 3-4, 427--442.

\bibitem{Andrews}
G. E. Andrews, \emph{An introduction to Ramanujan's ``lost" notebook}, Amer. Math. Monthly \textbf{86} (1979), no. 2, 89--108.

\bibitem{AB1}
G. E. Andrews, B.C. Berndt, \emph{Ramanujan's Lost Notebook, Part I}, Springer, New York, 2005.

\bibitem{as}
G. E. Andrews, J. Sellers, \emph{Congruences for the Fishburn numbers}, J. Number Theory \textbf{161} (2016), 298--310.

\bibitem{apostol}
T. M. Apostol, \emph{Introduction to analytic number theory}, Springer-Verlag, New York, 1976. 

\bibitem{BAJY}
B.C. Berndt, A. J. Yee, \emph{Combinatorial proofs of identities in Ramanujan's lost notebook associated with the Rogers-Fine identity and false theta functions}, Ann. Comb. \textbf{7} (2003), no. 4, 409--423.

\bibitem{Be}
C. Bijaoui, H.U. Boden, B. Myers, R. Osburn, W. Rushworth, A. Tronsgard and S. Zhou, \emph{Generalized Fishburn numbers and torus knots}, J. Combin. Theory Ser. A \textbf{178} (2021), 105355.

\bibitem{garvan}
F. G. Garvan, \emph{Congruences and relations for $r$-Fishburn numbers}, J. Combin. Theory Ser. A \textbf{134} (2015), 147--165.

\bibitem{an}
A. Goswami, \emph{Congruences for generalized Fishburn numbers at roots of unity}, Acta Arith. \textbf{199} (2021), no. 1, 77--102.

\bibitem{ano}
A. Goswami, R. Osburn, \emph{Quantum modularity of partial theta series with periodic coefficients}, Forum Math. \textbf{33} (2021), no. 2, 451--463.

\bibitem{gkr}
P. Guerzhoy, Z. Kent and L. Rolen, \emph{Congruences for Taylor expansions of quantum modular forms}, Res. Math. Sci. \textbf{1} (2014), Art. 17, 17pp.

\bibitem{habiro}
K. Habiro, \emph{Cyclotomic completions of polynomials rings}, Publ. Res. Inst. Math. Sci. \textbf{40} (2004), no. 4, 1127--1146.

\bibitem{hikami2}
K. Hikami, \emph{$q$-series and $L$-functions related to half-derivates of the Andrews-Gordon identity}, Ramanujan J. \textbf{11} (2006), no. 2, 175--197.

\bibitem{schnee}
W. Schnee, \emph{Die funktionalgleichung der Zetafunktion und der Dirichletschen Reihen mit periodischen Koeffizienten}, Math. Z. \textbf{31} (1930), no. 1, 378--390.

\bibitem{sto}
A. Stoimenow, \emph{Enumeration of chord diagrams and an upper bound for Vassiliev invariants}, J. Knot Theory Ramifications \textbf{7} (1998), no. 1, 93--114.

\bibitem{straub}
A. Straub, \emph{Congruences for Fishburn numbers modulo prime powers}, Int. J. Number Theory \textbf{11} (2015), no. 5, 1679--1690.

\bibitem{oeis}
The On-Line Encyclopedia of Integer Sequences, \url{https://oeis.org}

\bibitem{z1}
D. Zagier, \emph{Vassiliev invariants and a strange identity related to the Dedekind eta-function}, Topology {\bf 40} (2001), no. 5, 945--960.

\bibitem{z}
D. Zagier, \emph{Quantum modular forms}, Quanta of maths, 659--675, Clay Math. Proc., \textbf{11}, Amer. Math. Soc., Providence, RI, 2010.

\end{thebibliography}
\end{document}